\newcommand{\Real}{\mathbb{R}}
\newcommand{\M}{\mathbb{M}}
\newtheorem{theorem}{Theorem}[section]
\newtheorem{proposition}{Proposition}[section]
\newtheorem{lemma}{Lemma}[section]
\newtheorem{corollary}{Corollary}[section]
\newtheorem{notation}{Notation}[section]
\newtheorem*{main}{Main Theorem}
\newcounter{exa}
\newtheorem{example}[exa]{Example}
\newtheorem{remark}{Remark}[section]
\theoremstyle{definition}
\newtheorem{definition}{Definition}[section]
\keywords{Random billiards, surfaces of constant curvature, random map, mixing, integrable}
\title[Random circular billiards on surfaces of constant curvature]{Random circular billiards on surfaces of constant curvature: Pseudo integrability and mixing}
\author{T\'ulio Vales}
\email{tuliovales@ufu.br}
\date{\today}
\begin{document}

\begin{abstract}
Given a random map $(T_1, T_2, T_3, T_4, p_1, p_2, p_3, p_4)$, we define a random billiard map on a surface of constant curvature (Euclidean plane, hyperbolic plane, or the sphere). The Liouville measure is invariant for this billiard map. Finally, we show some dynamical properties such as ergodicity in the case of random circular billiards.
\end{abstract}

\maketitle
\section{Introduction}
Considering a surface $\M$ and a closed curve $\Gamma\subset\M$, the (deterministic) \textit{billiards} consists of a particle's free motion in the region bounded by $\Gamma$.
Such movement is realized through geodesics, and collisions with $\Gamma$ follow the law of reflection: angle of incidence equals angle of reflection.
In the present paper we consider billiards on (simply connected) surfaces of constant curvature: the Euclidean plane $\Real^2$, the hyperbolic plane $\mathbb{H}^2$, or the sphere $\mathbb{S}^2$.

In general, there are no explicit expressions for the billiard map, but in the case of smooth curves the expression of its derivatives as a function of the curvature $k$, the distance between collisions, and the sines of the reflected angles are well known.
For example, if $\Gamma$ is a simple, closed, regular, smooth, oriented curve parametrized by arc length in the Euclidean plane $\Real^2$, and if $F$ is the corresponding billiard map, then
\begin{equation*}
DF(s_0,\theta_0) = \frac{1}{\sin \theta_1} \begin{bmatrix} l_{01} k_0- \sin \theta_0 & l_{01} \\ k_1 (l_{01} k_0- \sin \theta_0) -k_0 \sin \theta_1 & l_{01} k_1- \sin \theta_1 \end{bmatrix},
\end{equation*}
where $(s_1,\theta_1)=F(s_0,\theta_0)$; $k_0$ and $k_1$ are the curvature of $\Gamma$ at points $\Gamma(s_0)$, $\Gamma(s_1)$, respectively; and $l_{01}$ is the distance between collisions.
For more details, see \cite{markarian}.

In the case of oval curves on a surface of constant curvature, the derivative of the corresponding billiard map can be found in \cite{luciano}.
For the reader interested in others billiards on a surface of constant curvature, we recommend \cite{gutkin1999hyperbolic}.

It is already known that every circular billiards on a surface of constant curvature has the same behavior.
That is, the trajectories are periodic if the exit angle $\theta_0$ is rational with $\pi$, and dense if $\theta_0$ is irrational with $\pi$.
Besides, this billiard map is integrable.
In particular, integrability makes them non-ergodic (with respect to the Liouville measure) independently of whether the exit angle is rational or irrational with $\pi$.

In this paper we define a random billiard map in which the angles of reflection will be ``drawn'' according to a probability distribution as in \cite{Vales}.
Considering this map in a general curve, it can be shown the invariance of the Liouville measure.

The motivation for the study of random billiards comes from the theory of gas kinetics. Through various measurements, it was observed in some studies that injecting a certain quantity of gases into an infinite test tube, the fraction of particles that exit after many collisions in a particular direction is proportional to the cosine of the initial angle. This behavior is referred to as the Knudsen’s law (see \cite{feres} for more details). This being said, we have two explanations for this phenomenon: the first one is that the infinite test tube is not completely smooth (with deformations); the other is that collisions between particles contribute to the reflected angle. To endorse the first one, if the test tube were completely smooth, the angle of incidence of a particle would be equal to the angle of reflection, as one can see in Figure~\ref{tubo de ensaio}.

\begin{figure}[!htb]
\centering
\includegraphics[scale=0.3]{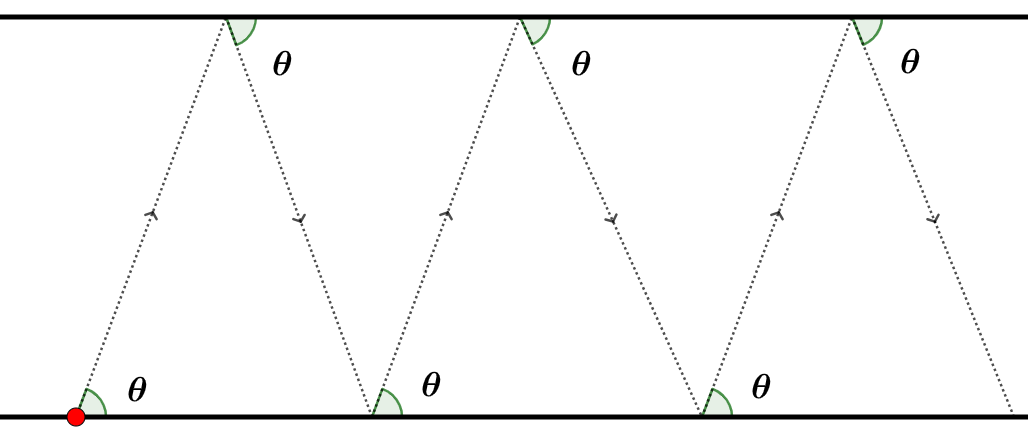}
\caption{Collision of a particle in a two-dimensional infinite test tube with no deformations.}\label{tubo de ensaio}
\end{figure}

In order to illustrate a random billiard, we can imagine the curve $\Gamma$ having minor irregularities (deformations) that will be modeled by isosceles triangles with base angle $\alpha$.
These irregularities are microscopic, meaning that the region bounded by $\Gamma$ is relatively large compared to the triangular irregularity.
The particular case of a circle as the billiard curve is depicted in Figure~\ref{triangular irregularidade} below.

\begin{figure}[!htb]
\centering
\includegraphics[scale=0.6]{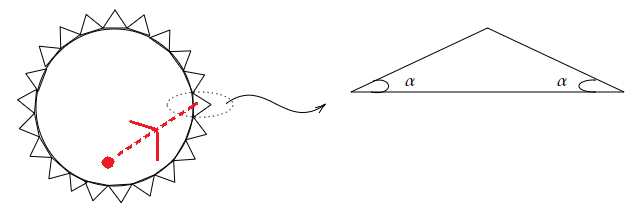}
\includegraphics[scale=0.6]{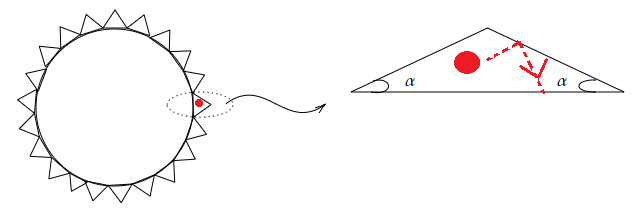}
\caption{Circular billiards with triangular irregularities.}
\label{triangular irregularidade}
\end{figure}

%\begin{figure}[!htb]
%\centering
%\caption{Collision of a particle within an irregularity.}\label{irrelaridadetriangularparticula}
%\end{figure}

Since we are dealing with a non-deterministic dynamical system, every starting point and initial angle will not have a single trajectory.
Due to this we will need the aid of the \textit{transition probability kernel}, denoted hereafter as $K(\theta, A)$, which describes the possible trajectories of particles reaching a set $A\subset\M$ with $\theta$ as the angle of incidence (initial angle). Furthermore, for a dynamical study of this system it is required an invariant measure which will be provided through such operator $K(\theta,A)$.

%Given functions $T_1(\theta)=\theta+2\alpha$, $T_2(\theta)=-\theta+2\pi-4\alpha$, $T_3(\theta)=\theta-2\alpha$ and $T_4(\theta)=-\theta+4\alpha$, and probabilities $p_1, p_2, p_3, p_4$ (which will be described in Section 3), the Feres random map is defined by $T(\theta) = T_i(\theta)$ with probability $p_i(\theta).$

When the curve $\Gamma$ is a circle we will prove the Lyapunov exponent of the random billiard map is null for every point and direction.
In addition, we will show the abundance of dense trajectories in this particular case.
We will also see that random circular billiard map has a certain integrability.
However, in contrast to what happens in the deterministic case, circular random billiards can be mixing (recall the parameter $\alpha$ that comes from the irregularities):

\begin{main}
Considering a surface of constant curvature, the random circular billiard map is mixing if and only if $\frac{\alpha}{\pi}$ is an irrational number.
\end{main}

Let us present the division of this work in the sequence.
Section 2 contains the formal definition of circular billiards on surfaces of constant curvature, and its explicit expression.
In Sections 3 and 4 we will introduce the Feres random map and determine how it will define random billiards.
Section 5 deals with the particular case of random circular billiards.
Finally, in Section 6 we prove the Main Theorem and also the pseudo integrability of random circular billiards.

\section{Circular billiards on surface of constant curvature}

Let us begin with some basic definitions.
The sphere $\mathbb{S}^2$ is the set $\mathbb{S}^2 = \{(x,y,z): x^2+y^2+z^2=1\}$.
Observe that the function $\varphi_{\mathbb{S}^2}(\rho, \theta) = (\sin \rho \cos \theta, \sin \rho \sin \theta, \cos \rho )$ with $0 < \rho < \pi$ and $0 < \theta < 2\pi$ transforms $\mathbb{S}^2$ into a surface of $\mathbb{R}^3$.
The hyperbolic plane $\mathbb{H}^2$ is a leaf of the hyperboloid, that is, the set $\mathbb{H}^2 = \{(x,y,z): x^2+y^2-z^2=-1 \mbox { and } z>0\}$.
Again, notice that the function $\varphi_{\mathbb{H}^2}(\rho, \theta) = (\sinh \rho \cos \theta, \sinh \rho \sin \theta, \cosh \rho)$  with $\rho>0$ and $0 < \theta < 2\pi$ transforms $\mathbb{H}^2$ into a surface of $\mathbb{R}^3$.

By the Killing-Hopf Theorem (\textit{cf.} \cite{stillwell2012geometry}), every two-dimensional simply-connected space with constant curvature $k$ is isometric to:
\begin{enumerate}
    \item the Euclidean plane $\mathbb{R}^2$ if $k = 0$;
    \item  the hyperbolic plane $\mathbb{H}^2$ if $k=-1$;
    \item the sphere $\mathbb{S}^2$ if $k=1$.
\end{enumerate}
Consequently, $\mathbb{R}^2$, $\mathbb{S}^2$ and $\mathbb{H}^2$ are essentially the only complete, simply-connected Riemannian manifolds with constant curvature.
Therefore, it is sufficient  to consider these surfaces.

In order to determine the billiards map in such surface, let us consider a closed convex curve $\Gamma$ parametrized by arc length as a function $\lambda: [0,L)\to \Gamma$.
Collisions with $\Gamma$ follows the \textit{law of reflection}: the angle of incidence $\theta_1$ equals the angle of reflection $\theta_2$.

Given a point $\lambda(t_1)=S_1 \in \Gamma$ and a direction $\theta\in [0,\pi)$, the angle of incidence $\theta_1(S_1,\theta)$ reaching the point $\lambda(t_2)=S_2$ is the one between $\lambda'(t_2)$ and the geodesic from $S_1$ into $S_2$.
Therefore, we define the billiard map as
\begin{equation*}
\begin{matrix}
F\colon & \Gamma\times[0,\pi) & \longrightarrow & \Gamma\times[0,\pi)\\
& (S_1,\theta) & \longmapsto & (S_2(S_1,\theta), \theta_1(S_1, \theta)).
\end{matrix}
\end{equation*}
Observe that $\theta_2=\theta_1(S_1,\theta)$ due to the law of reflection.

Let us consider now the particular case when the curve $\Gamma$ is a circle with radius $r_0>0$ in $\Real^2$, $\mathbb{H}^2$, or $\mathbb{S}^2$.
In the following we will make use of the constant $h\in\Real$ given by
\begin{equation*}
h= \begin{cases}
r_0 &  \text{ in }\mathbb{R}^2 \\
\sinh(r_0) & \text{ in }\mathbb{H}^2 \\
\sin (r_0) & \text{ in }\mathbb{S}^2.
\end{cases}
\end{equation*}
Taking into account last paragraph's notation, it is known that the arc length from $S_1$ to $S_2$ is $l(\gamma)=h\gamma$, where $\gamma>0$ is the angle between those points.
Thus, we can say that $S_2=S_1+l(\gamma)$.
We would like to mention that $\gamma$ depends only on $S_1$ and $\theta$.
This case is illustrated in Figure~\ref{circularbilhar}.

\begin{figure}[h!]
    \centering
    \includegraphics[scale=0.8]{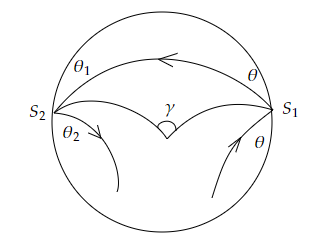}
    \caption{Circular billiards in $\mathbb{R}^2$, $\mathbb{H}^2$, or $\mathbb{S}^2$.}
    \label{circularbilhar}
\end{figure}

We can then give the explicit expression of the circular billiards map in $\mathbb{R}^2, \mathbb{H}^2$ or $\mathbb{S}^2$.
Indeed, it is the map $F\colon \Gamma\times[0,\pi)\to \Gamma\times[0,\pi)$ defined by
\begin{equation}\label{bilharcircular}
    F(S,\theta) = (S+l(\gamma(\theta)) \mod 2\pi h, \theta).
\end{equation}
Also, its derivative is given by
\begin{equation*}
DF(S,\theta) = \begin{bmatrix} 1 & \gamma'(\theta)h \\
0 & 1 \end{bmatrix}.
\end{equation*}

Notice that the action of the first coordinate of $F$ in the circle $\Gamma$ can be seen as a rotation.
That is, a particle at position $S\in\Gamma$ moves to position $S+l(\gamma(\theta)) \mod 2\pi h$ and, since the second coordinate of $F$ is constant, this movement is indeed a rotation.
Hence, if $\gamma(\theta) = \frac{2m\pi}{n}$, then $F^n(S,\theta)=(S,\theta)$ and therefore the orbit of the particle is periodic.
Otherwise, if $\gamma(\theta) = \beta \pi$, where $\beta$ is an irrational number, then the orbit of the particle is dense at the boundary of the circular table.

We also have circular billiards that are integrable, that is, exists an $F$-invariant map $G\colon\Gamma\times[0,\pi)\to\Gamma\times[0,\pi)$ that is not constant.
Indeed, if one takes $G(S,\theta)=\theta$, then $G(F(S,\theta)) = G(S + 2\theta \mod 2\pi h, \theta) = \theta = G(S,\theta)$, as we claimed.

\begin{notation}
From now on we will consider $\mathbb{M} = \mathbb{R}^2$, $\mathbb{H}^2$, or $\mathbb{S}^2$, and $M=\Gamma\times[0,\pi)$ as the rectangle where the circular billiard map $F$ is defined.
\end{notation}

\section{Random Maps and Markov Chains}
Let $(X, \mathcal{B}(X))$ be a measurable space.
If we consider maps $T_i\colon X \to X$ and functions $p_i\colon X \to [0,1]$ with $i = \{1, 2, \dots, N\}$, then we define the \textit{random map} $T\colon X\to X$ as $T(\theta)= T_i(\theta)$ with probability $p_i(\theta)$.
Iteratively, for each $n \in \mathbb{N}$, we have $T^{(n)}(\theta) = T_{i_n}\circ T_{i_{n-1}} \circ \dots \circ T_{i_1}(\theta)$ with probability $p_{i_1}(\theta)p_{i_2}(T_{i_1}(\theta))\dots p_{i_n}(T_{i_{n-1}}\circ \dots \circ T_{i_1}(\theta))$.

The \textit{transition probability kernel} of the random map $T$ is given by 
\begin{equation}\label{nucleo}
K(\theta, A) = \sum_{i=1}^{N} p_i(\theta) 1_{A}(T_i(\theta))\qquad\text{for all $\theta\in X$ and $A\in\mathcal{B}(X)$}.
\end{equation}
This operator describes the evolution of an initial distribution (probability measure) $\nu$ on $(X, \mathcal{B}(X))$ under the random map $T$ iteratively as: $\nu^{(0)}= \nu$ and
\begin{equation}\label{def:measure evolution}
\nu^{(n+1)}(A) = \int_X K(\theta,A)d\nu^{(n)}(\theta)\qquad\text{for all $A\in \mathcal{B}(X)$ and $n\ge 1$}.
\end{equation}

The next definition deals with a sufficient condition on measures to be invariant by the random map $T$.
\begin{definition}\label{medida invariante}
We say that a measure $\mu$ in $(X,\mathcal{B}(X))$ is $T$-invariant if $\mu(A) = \sum_{i=1}^{N} \int_{T^{-1}_i(A)} p_i(\theta)d\mu(\theta)$ for all $A \in \mathcal{B}(X)$. 
\end{definition}

In this paper we will consider the following particular case of random map introduced in \cite{feres}.
For this, let $X=[0,\pi]$ endowed with its Borel $\sigma$-algebra, and fix an $\alpha<\dfrac{\pi}{6}$.
Remember that this parameter refers to deformations in the circle $\Gamma$, as shown in Figure~\ref{triangular irregularidade}.

The \textit{Feres random map} $T: [0,\pi] \to [0,\pi]$ is given by 
\begin{equation}\label{M1}
T(\theta) = T_i(\theta) \text{ with probability } p_i(\theta)
\end{equation}
where $T_1(\theta) = \theta + 2\alpha$, $T_2(\theta) = -\theta + 2\pi - 4\alpha$, $T_3(\theta) = \theta -2\alpha$ and $T_4(\theta) = -\theta + 4\alpha$, and their respective probabilities are
\begin{align*}
p_1(\theta) &= \begin{cases} 1, & \hspace{26.5mm}\text{if } \theta \in [0,\alpha),\\
u_{\alpha}(\theta), & \hspace{26.5mm}\text{if } \theta \in [\alpha, \pi -3\alpha),\\
2\cos(2\alpha)u_{2\alpha}(\theta), & \hspace{26.5mm}\text{if } \theta \in [\pi - 3\alpha, \pi -2\alpha),\\
0, & \hspace{26.5mm}\text{if } \theta \in [\pi-2\alpha, \pi],\end{cases}\\
p_2(\theta) &= \begin{cases} 0,  & \hspace{11.5mm}\text{if } \theta \in [0,\pi -3\alpha),\\
u_{\alpha}(\theta)-2\cos (2\alpha)u_{2\alpha}(\theta),  & \hspace{11.5mm}\text{if } \theta \in [\pi -3\alpha, \pi -2\alpha),\\
u_{\alpha}(\theta),  & \hspace{11.5mm}\text{if } \theta \in [\pi - 2\alpha, \pi -\alpha),\\
0, & \hspace{11.5mm}\text{if } \theta \in [\pi-\alpha, \pi],
\end{cases}\\
p_3(\theta) &= \begin{cases} 0, & \hspace{23mm}\text{if } \theta \in [0,2\alpha),\\
2\cos (2\alpha)u_{2\alpha}(-\theta), & \hspace{23mm}\text{if } \theta \in [2\alpha, 3\alpha),\\
u_{\alpha}(-\theta), & \hspace{23mm}\text{if } \theta \in [3\alpha, \pi -\alpha),\\
1, & \hspace{23mm}\text{if } \theta \in [\pi-\alpha, \pi],
\end{cases}\\
p_4(\theta) &= \begin{cases} 0, & \hspace{5mm}\text{if } \theta \in [0,\alpha),\\
u_{\alpha}(-\theta), & \hspace{5mm}\text{if } \theta \in [\alpha, 2\alpha),\\
u_{\alpha}(-\theta)-2\cos(2\alpha)u_{2\alpha}(-\theta), & \hspace{5mm}\text{if } \theta \in [2\alpha, 3\alpha),\\
0, & \hspace{5mm}\text{if } \theta \in [3\alpha, \pi]  \end{cases}
\end{align*}
with $u_{\alpha}(\theta) = (1+\frac{tg\alpha}{tg\theta}).$
For more details see \cite{lamb,feres, Vales}.

The result below is a particular case of \cite[Proposition~2.1]{feres}.
We present its proof for the sake of completeness.

\begin{proposition}\label{prop:T-invariance}
If $T$ is the Feres random map as in \eqref{M1}, then the measure defined by $\mu(A)=\frac{1}{2}\int_A \sin(\theta)d\theta$ is $T$-invariant. 
\end{proposition}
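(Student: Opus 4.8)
The plan is to translate the invariance identity of Definition~\ref{medida invariante} into a pointwise identity between densities, and then to verify that identity by a short case analysis. Write $\mu=g\,\lambda$, where $\lambda$ is Lebesgue measure on $[0,\pi]$ and $g(\theta)=\tfrac12\sin\theta$. Each branch $T_i$ is affine with $|T_i'|\equiv 1$, and on the set $D_i:=\{\theta\in[0,\pi]:p_i(\theta)>0\}$ it is a bijection onto $R_i:=T_i(D_i)$; let $T_i^{-1}\colon R_i\to D_i$ be its inverse. Since $p_i$ vanishes off $D_i$, the substitution $\phi=T_i(\theta)$ (Jacobian $1$) gives $\int_{T_i^{-1}(A)}p_i\,g\,d\lambda=\int_{A\cap R_i}(p_i\circ T_i^{-1})\,(g\circ T_i^{-1})\,d\lambda$ for every Borel $A\subseteq[0,\pi]$. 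Summing over $i$ and letting $A$ vary, $\mu$ is $T$-invariant if and only if
\begin{equation}\label{eq:fpsketch}
g(\phi)=\sum_{i:\,\phi\in R_i}p_i\bigl(T_i^{-1}\phi\bigr)\,g\bigl(T_i^{-1}\phi\bigr)\qquad\text{for a.e.\ }\phi\in[0,\pi],
\end{equation}
i.e.\ $g$ must be a fixed point of the transfer operator of the random map. (The constant $\tfrac12$ plays no role in \eqref{eq:fpsketch}; it is there only so that $\mu([0,\pi])=1$.)

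Two elementary facts drive the verification. With $u_\beta(\theta)=\tfrac12\bigl(1+\tan\beta\cot\theta\bigr)$ (the normalisation under which the four $p_i$ sum to $1$), the addition formulas give, wherever the expressions are defined,
\begin{equation*}
u_\alpha(\pm\theta)\,\sin\theta=\frac{\sin(\theta\pm\alpha)}{2\cos\alpha},\qquad 2\cos(2\alpha)\,u_{2\alpha}(\pm\theta)\,\sin\theta=\sin(\theta\pm2\alpha).
\end{equation*}
Thus, after multiplication by $\sin\theta$, every branch of every $p_i$ becomes a single sine (or a difference of two) whose phase is shifted by exactly the $\pm2\alpha$ carried by the matching $T_j$; composing with $T_i^{-1}$, each summand in \eqref{eq:fpsketch} is a multiple of $\sin(\phi\pm\alpha)$ or of $\sin\phi$. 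It also helps to record the symmetry $\sigma(\theta)=\pi-\theta$: it conjugates $T_1$ with $T_3$ and $T_2$ with $T_4$, fixes $g$, and satisfies $p_1=p_3\circ\sigma$ and $p_2=p_4\circ\sigma$; hence it suffices to check \eqref{eq:fpsketch} on $[0,\pi/2]$.

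The verification is then finite. Because $\alpha<\pi/6$, the breakpoints of the $p_i$ are ordered $0<\alpha<2\alpha<3\alpha<\pi-3\alpha<\pi-2\alpha<\pi-\alpha<\pi$, partitioning $[0,\pi]$ into seven intervals, four of them in $[0,\pi/2]$. On each one I read off which $R_i$ contain $\phi$ (equivalently which $T_i^{-1}\phi$ lie in $D_i$), which branch of the relevant $p_i$ is in force at $T_i^{-1}\phi$, and then collapse the sum using the identities above together with $\sin(\phi-\alpha)+\sin(\phi+\alpha)=2\cos\alpha\,\sin\phi$ (and the telescoping of the $\pm2\alpha$ shifts). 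For instance, on $(0,\alpha)$ only $T_3$ is active, with $T_3^{-1}\phi=\phi+2\alpha\in(2\alpha,3\alpha)$ and $p_3=2\cos(2\alpha)\,u_{2\alpha}(-\,\cdot\,)$ there, so the sum is $\tfrac12\sin\bigl((\phi+2\alpha)-2\alpha\bigr)=\tfrac12\sin\phi$; on $(3\alpha,\pi-3\alpha)$ only $T_1,T_3$ are active (with $p_1=u_\alpha$ and $p_3=u_\alpha(-\,\cdot\,)$), giving $\tfrac{1}{4\cos\alpha}\bigl(\sin(\phi-\alpha)+\sin(\phi+\alpha)\bigr)=\tfrac12\sin\phi$; and on $(\alpha,2\alpha)$ and $(2\alpha,3\alpha)$, where two and three branches are active respectively, the same sum-to-product manipulation again yields $\tfrac12\sin\phi$. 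In every case the right-hand side of \eqref{eq:fpsketch} equals $g(\phi)$, so $\mu$ is $T$-invariant.

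The only genuine work is the bookkeeping in the last step: for each of the four relevant intervals one must correctly list the active indices $i$, the preimages $T_i^{-1}\phi$, and the branch of each $p_i$ evaluated there; once that small table is in place, each row is a one-line trigonometric identity, and the hypothesis $\alpha<\pi/6$ is precisely what keeps the intervals in the stated order and all $p_i$ nonnegative. An essentially equivalent but more symmetric route is to prove the stronger assertion that the chain with kernel \eqref{nucleo} is reversible with respect to $\mu$ — exploiting that $T_1,T_3$ are mutually inverse translations and $T_2,T_4$ are involutions — since reversibility implies invariance; this packages the same computations as a handful of pairwise detailed-balance relations rather than seven interval checks.
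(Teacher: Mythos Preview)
Your argument is correct. You set up the invariance condition as a fixed-point equation for the transfer operator, reduce the trigonometry to the two clean identities $u_\alpha(\pm\theta)\sin\theta=\sin(\theta\pm\alpha)/(2\cos\alpha)$ and $2\cos(2\alpha)u_{2\alpha}(\pm\theta)\sin\theta=\sin(\theta\pm2\alpha)$, exploit the $\theta\mapsto\pi-\theta$ symmetry to halve the work, and then check the remaining intervals. I spot-checked your cases $(0,\alpha)$, $(\alpha,2\alpha)$, $(2\alpha,3\alpha)$ and $(3\alpha,\pi-3\alpha)$ and they all collapse to $\tfrac12\sin\phi$ as claimed. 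Your side remark that one could instead verify detailed balance (using that $T_1,T_3$ are inverse translations and $T_2,T_4$ are involutions) is also correct and is indeed how Feres organises the computation.

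The paper, by contrast, does not prove this proposition at all: its ``proof'' is the single line ``See \cite{feres}.'' So you are supplying a self-contained verification where the paper simply cites the original source. One small point worth flagging explicitly in a final write-up: the paper records $u_\alpha(\theta)=1+\tan\alpha/\tan\theta$, but with that normalisation the $p_i$ sum to $2$ rather than $1$ on the generic interval; you silently (and correctly) absorb the missing factor $\tfrac12$ into your definition of $u_\beta$, which is the right fix, but it would help the reader to say so outright.
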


\begin{proof}
Consider the billiard table as an isosceles triangle whose boundary has length $L>1$ and whose base is a straight line segment $\overline{pq}$ of length $1$, which will be identified by the interval $[0,1]$.

The (deterministic) billiard map $F\colon [0,L)\times (0,\pi)\to [0,L)\times (0,\pi)$ preserves the measure $\lambda \times \mu$, where $\lambda$ is the (normalized) Lebesgue measure in $[0,L)$.
In addition, the first return map $G\colon [0,1]\times (0,\pi)\to [0,1]\times (0,\pi)$ with respect to the base $\overline{pq}$ preserves the measure $(\lambda \times \mu)|_{[0,1]\times (0,\pi)}$.

Now, consider $\pi_2\colon [0,L)\times (0,\pi)\to (0,\pi)$ as the projection on the second coordinate.
Given any $\mu$-integrable function $f\colon [0,\pi]\to\Real$, we have:
\begin{align}
 \int f \;d\mu &= \iint f \circ \pi_2 \;d( \lambda \times \mu )\nonumber\\
 &= \iint f \circ \pi_2 \circ G\; d(\lambda \times \mu )\nonumber\\
 &=\int_{[0,\pi]} \int_{[0,1]} f(\pi_2(G(x,\theta)))\; d\lambda(x) d\mu(\theta)\nonumber\\
 &= \int_{[0,\pi]} \int_{[0,1]} f(T_x(\theta))\; d\lambda(x) d\mu(\theta).\label{eq:mu(f)}
\intertext{On the other hand, for any measurable set $A\subset [0,\pi]$, we also have:}
  \sum _{i=1}^4 \int_{[0,\pi]} p_i(\theta) 1_A(T_i(\theta))\; d\mu(\theta) &= \int_{[0,\pi]}K(\theta,A)\; d\mu(\theta)\nonumber\\
  &= \int_{[0,\pi]}\int_{[0,1]} 1_A(T_x(\theta))d\lambda(x)d\mu(\theta).\label{eq:K(theta,A)}
\end{align}

Considering $f$ as the characteristic function $1_A$, then the combination of \eqref{eq:mu(f)} and \eqref{eq:K(theta,A)} implies that $\mu(A) = \int 1_A\; d\mu = \sum _{i=1}^4 \int_{[0,\pi]} p_i(\theta) 1_A(T_i(\theta))\; d\mu(\theta)$.
Therefore, it follows that $\mu$ is $T$-invariant, as in Definition \ref{medida invariante}.
\end{proof}

Let us denote the space of unilateral sequences of four symbols by $\Sigma_4 = \{1,2,3,4 \}^{\mathbb{N}}$.
An element of such space is represented by $\underline{x}=(x_1,x_2,\dots)$.
With this in mind we give some definitions that will be useful later.
For this purpose we let the angle $\theta\in (0,\pi)$ be fixed.

\begin{definition}\label{sigmatheta}
Let $\Sigma_{\theta}\subset\Sigma_4$ be the subspace defined by
\begin{equation*}
\Sigma_\theta :=\{ \underline{x} \in \Sigma\colon p_{x_1}(\theta)p_{x_2}(T_{x_1}(\theta))\dots p_{x_k}\big( T_{x_{k-1}}\circ T_{x_{k-2}}\circ \dots \circ T_{x_1}(\theta)\big) >0, \;\; \forall k \geq 1  \}.
\end{equation*}
In other words, a sequence $\underline{x}$ is in $\Sigma_{\theta}$ if the composition $T_{\underline{x}}^{(k)}(\theta):= T_{x_k} \circ T_{x_{k-1}} \circ \dots \circ T_{x_1}(\theta)$ has positive probability for each $k \in \mathbb{N}$.
\end{definition}

\begin{definition} \label{admissivel}
Given $\underline{x}\in \Sigma_{\theta}$, the related sequence formed by elements $\theta_n:=T_{\underline{x}}^{(n)}(\theta)$ is called an \emph{admissible sequence} for $\theta$ with respect to $\underline{x}$.      
\end{definition}

%\begin{notation}
%Given $\underline{x} \in \Sigma_\theta,$ we denote $T_{\underline{x}}^{(n)}(\theta):= T_{x_n} \circ T_{x_{n-1}} \circ \dots \circ T_{x_1}(\theta).$
%\end{notation}

\begin{definition}\label{def1}
Let $\mathcal{C}(\theta)$ be the set of all possible future images of the angle $\theta$ by the map $T$.
Numerically, we have
\begin{equation*}
\mathcal{C}(\theta) :=\left\{ \theta' \in (0,\pi)\colon T_{\underline{x}}^{(k)}(\theta)=\theta' \text{ for some $k\in \mathbb{N}$ and } \underline{x} \in \Sigma_\theta \right\}.
\end{equation*}
In other words, an angle $\theta'$ is in $\mathcal{C}(\theta)$ if there is an admissible sequence $(\theta_n)_n$ such that $\theta_k = \theta'$ for some $k \in \mathbb{N}$.
\end{definition}

\begin{remark}\label{obs1}
We would like to observe that $\theta \in \mathcal{C}(\theta')$ for every $\theta' \in \mathcal{C}(\theta)$, and hence $\mathcal{C}(\theta) = \mathcal{C}(\theta')$ in this case.
Indeed, that is a consequence of $T_1 \circ T_3 = T_3 \circ T_1 = Id$, $T_2 \circ T_2 = Id$, and $T_4 \circ T_4 = Id$.
Therefore, given $\theta, \theta' \in (0,\pi)$, we conclude that either $\mathcal{C}(\theta) \cap \mathcal{C}(\theta') = \emptyset$ or $\mathcal{C}(\theta) = \mathcal{C}(\theta')$.
\end{remark}

In the following we state a property of the set $\mathcal{C}(\theta)$ extracted from \cite{Vales}. 

\begin{proposition}{\cite[Proposition 3.7 and Proposition 3.8]{Vales}}
Considering $\alpha<\frac{\pi}{6}$, the set $\mathcal{C}(\theta)$ is either
\begin{enumerate}
    \item finite if $\alpha$ is rational with $\pi$; or\label{inv}
    \item countably infinite if $\alpha$ is irrational with $\pi$.\label{infinito}
\end{enumerate}
\end{proposition}

Another property of $\mathcal{C}(\theta)$ is that we can associate a Markov chain with it whose transition probabilities are given by $p_1$, $p_2$, $p_3$ and $p_4$. %For more details see \cite{Vales}.
The general case is explained in the following.

Let $(X,\mathcal{A})$ be a measurable space with $X=\{ x_1, x_2, \dots , x_r, \dots \}$ finite or countably infinite.
Consider $\Sigma_X = X^{\mathbb{N}}$ as the space of unilateral sequences with alphabet $X$ and endowed with the $\sigma$-algebra product.
Take $\sigma\colon \Sigma_X \to \Sigma_X$ to be the left shift map, that is, the one defined by $\sigma(x_n)_n = (x_{n+1})_n$.
Given a family of transition probabilities $\{p_{x_ix_j}\colon i,j =1,2,\dots,r, \dots \}$, we define the transition matrix $P=(p_{x_ix_j})_{i,j}$ with $\sum_j p_{x_ix_j} = 1$.
This matrix is also called a \textit{stochastic matrix}.

Note that a measure $\mu$ in $X$ is completely characterized by the values $\mu_i = \mu(x_i)$, with $i=1,2, \dots, r, \dots$.
We say that $\mu$ is a \textit{stationary measure} for the Markov chain if it satisfies $\sum_{i} \mu_i p_{x_ix_j} = \mu_j$ for each $j$.
If the stationary measure $\mu$ is a probability, then we say that $\mu$ is a \textit{stationary distribution} for the Markov chain.

We then define the \textit{Markov measure} with respect to $\mu$ as $\nu (m; x_m, \dots, x_n) = \mu_{x_m} p_{x_m, x_{m+1}} \dots p_{x_{n-1}, x_n}$.
Finally, consider the subset $\Sigma_P\subset\Sigma_X$ of all sequences $(x_n)_n$ such that $p_{x_n, x_{n+1}} > 0$ for all $n \in \mathbb{N}$.
%We say that the sequences $\underline{x} \in \Sigma_P$ are admissible sequences.
The map $\sigma$ restricted to $(\Sigma_P,\nu)$ is called the \textit{Markov shift map}.

\begin{definition}
The stochastic matrix $P$ is \textit{irreducible} if, for all $x_i, x_j$, there is an $n > 0$ such that $p^n_{x_i, x_j} > 0$.
In other words, $P$ is irreducible if it is possible to move from any state $x_i$ to any state $x_j$ in a certain number $n$ of steps depending on $i$ and $j$.
\end{definition}

\begin{example}\label{ex3.2}
Let $\alpha = \frac{\pi}{8}$ and take any $\theta \in (0,\alpha)$.
In this case, the set $\mathcal{C}(\theta)$ is given by:
\begin{equation*}
\mathcal{C}(\theta) = \{\theta, \theta+2\alpha, \theta+4\alpha, \theta+6\alpha, -\theta+2\alpha, -\theta+4\alpha, -\theta+6\alpha, -\theta+8\alpha \}.
\end{equation*}
Note that the Markov chain associated with this set is irreducible and 2-periodic as one can see in Figure~\ref{fig8}.
% so $tr(A^{2n+1}) = 0$ for all $n \in \mathbb{N}.$ Similarly for all values of $\theta$ in within some sub-interval of the partition of $(0,\pi).$ %Of the remaining values only $2\alpha, 4\alpha$ and $\pi-3\alpha = 6\alpha$ have odd period orbit. 
\begin{figure}[h]
\centering
\includegraphics[scale=0.2]{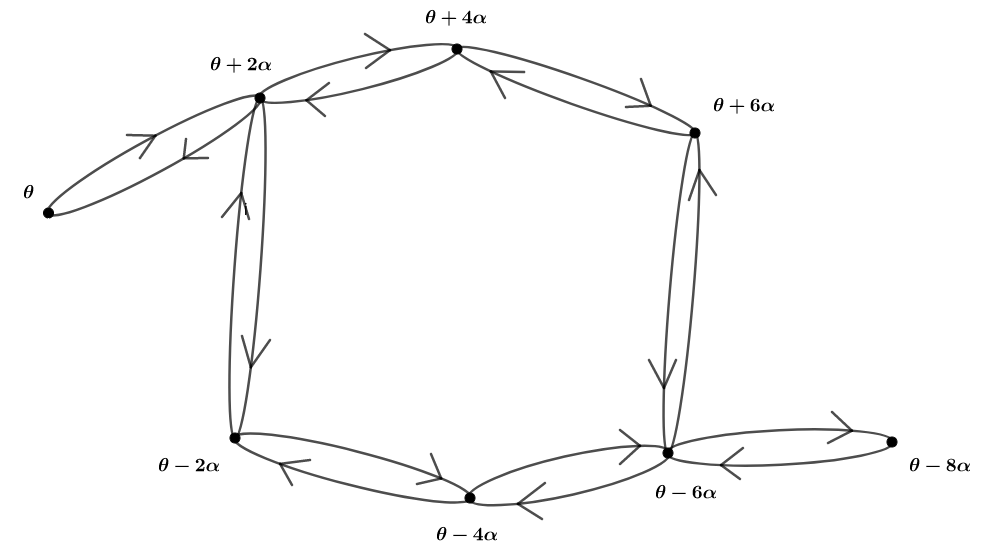}
\caption{Markov chain for $\alpha=\frac{\pi}{8}$ and $\theta \in (0,\alpha).$}\label{fig8}
\end{figure}
\end{example}

\begin{example}
Let $\alpha = \frac{\pi}{7}$ and take any $\theta \in (0,\alpha)$.
In this case, the set $\mathcal{C}(\theta)$ is given by:
\begin{equation*}
\mathcal{C}(\theta)=\{\theta, \theta+2\alpha, \theta+4\alpha, \theta+6\alpha, -\theta+2\alpha, -\theta+4\alpha, -\theta+6\alpha \}.
\end{equation*}
Note that the Markov chain associated with this set is irreducible and aperiodic as one can see in Figure~\ref{fig2}.
\begin{figure}[h]
\centering
\includegraphics[scale=0.2]{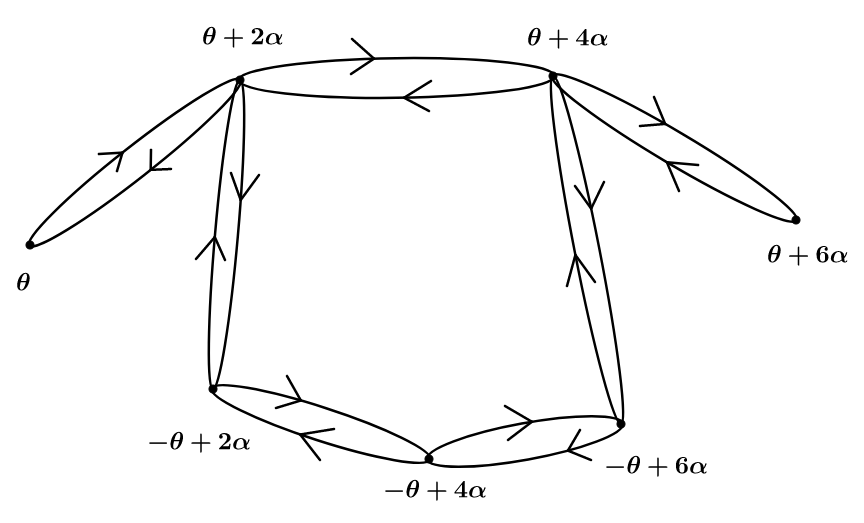}
\caption{Markov chain $\alpha=\frac{\pi}{7}$ and $\theta \in (0,\alpha).$}\label{fig2}
\end{figure}
\end{example}

We end this section with a sufficient condition to the ergodicity of the Markov shift map with respect to the Feres random map $T$.
This result was proven in \cite{Vales}.
Before we state it, recall Definition~\ref{sigmatheta} and, for a fixed angle $\theta \in (0,\pi)$, consider the space $\Sigma = \cup_{\theta' \in \mathcal{C}(\theta)} \Sigma_{\theta'}$.

\begin{proposition}\label{markovrandom}
The Markov shift map in $\Sigma$ is ergodic provided that
\begin{enumerate}
    \item $\alpha$ is rational with $\pi$; or
    \item $\alpha$ is irrational with $\pi$ and the Markov chain is aperiodic.
\end{enumerate}
\end{proposition}

\section{A random billiards map}
Remember that we are considering a curve $\Gamma$ in $\mathbb{M}=\mathbb{R}^2$, $\mathbb{H}^2$, or $\mathbb{S}^2$, whose length is $L>0$.
Also, the corresponding deterministic billiards map $F\colon [0,L)\times (0,\pi)\to [0,L)\times (0,\pi)$ associated with $\Gamma$ is given by $F(S,\theta)=(S_1(S,\theta), \theta_1(S,\theta))$, like discussed in Section~2.
It is known that $F$ preserves the measure $\lambda \times \mu$, where $\lambda$ is the normalized Lebesgue measure with respect to the length of the curve $\Gamma$, and $\mu(A)=\frac{1}{2}\int_A\sin(\theta)d\theta$ as in Proposition~\ref{prop:T-invariance}.

In order to define a random billiards map in the rectangle $M=[0,L)\times (0,\pi)$ we need to consider the following extension of the Feres random map $T$:
\begin{equation*}
\overline{T}(S,\theta)=(S,T_i(\theta))
\end{equation*}
with probability $p_i(\theta)$.

\begin{definition}\label{def:bilhar aleatorio}
The \textit{random billiards map} with respect to $(F,T)$ is the map $\overline{F}\colon M \to M$ defined by
\begin{equation}\label{random}
\overline{F}(S,\theta):= F\circ\overline{T}(S,\theta)=\Big(S_1(S,T_i(\theta)),\theta_1(S,T_i(\theta))\Big)
\end{equation}
with probability $\overline{p}_i(S,\theta):=p_i(\theta)$.
\end{definition}

As we have discussed in the Introduction, random billiards maps defined this way can be interpreted as deterministic billiards maps on a not quite smooth curve whose irregularities are represented by isosceles triangles with base angle $\alpha$.
Since these irregularities are microscopic we can make use of the Gauss-Bonnet Theorem, and hence consider those triangular irregularities as triangles in the Euclidean plane, where the geodesics are straight lines. 

In this paper we are interested in the case when $\Gamma$ is a circle.
Thus the random circular billiards map is given by 
\begin{equation*}
\overline{F}(S,\theta) := F\circ \overline{T}(S,\theta) = (S +l(\gamma(T_i(\theta))) \mod 2\pi h,T_i(\theta))
\end{equation*}
with probability $p_i(\theta)$.

To the next result, recall the definition of evolution of measures, denoted $\nu^{(n)}$, as in \eqref{def:measure evolution}.

\begin{theorem}\label{prop4.4}
For every measure $\nu$ in $[0,\pi]$ absolutely continuous with respect to $\mu$, we have that $\nu^{(n)}\to \mu$ if and only if $\frac{\alpha}{\pi}$ is an irrational number.
\end{theorem}

\begin{proof}
To begin, we would like to observe that Theorem~4.3 of \cite{Vales} holds in $\mathbb{R}^2, \mathbb{H}^2$, and $\mathbb{S}^2$, since the second coordinate of the random circular billiards map $\overline{F}$ is in fact the Feres random map $T$.
Precisely,
\begin{equation*}
\pi_2(\overline{F}(s,\theta))= \pi_2(s + l(\gamma(T_i(\theta))) \mod 2\pi h, T_i(\theta))=T_i(\theta).
\end{equation*}
Therefore, the result follows from Theorem~3.13 of \cite{Vales}.  
\end{proof}

\subsection{An Invariant Measure For The Random Billiards Map}\label{secmedida}

With the terminology introduced in the beginning of this section, let us consider the map $\overline{F}_i(S,\theta)=F\circ \overline{T}_i(S,\theta)$, for each $i=1,2,3,4$.
According to Definition~\ref{medida invariante}, a measure $\nu$ is $\overline{F}$-invariant if
\begin{equation*}
\nu(A)  = \sum_{i=1}^4 \iint _{\overline{F}^{-1}_i(A)}p_i(\theta)d\nu(S,\theta)
\end{equation*}
for every Borel set $A$ of $M$.

\begin{proposition}\label{prop:F-invariant measure}
The measure $\lambda \times \mu$ is $\overline{F}$-invariant either:
\begin{enumerate}
    \item using the usual definition of the random billiards map: $\overline{F}(S,\theta)= F\circ \overline{T}_i(S,\theta)$ with probability $p_i(\theta)$; or
    
    \item using an alternate definition of the random billiards map: $\overline{F}(S,\theta)=\overline{T}_i \circ F(S,\theta)$ with probability $p_i(F(S,\theta))$.
\end{enumerate}
\end{proposition}

\begin{proof}
The proof follows on the same lines as in Propositions 4.5 and 4.6 of \cite{Vales}.
\end{proof}

\section{The random circular billiards map on surfaces of constant curvature}

In this section we will consider the random circular billiards map $\overline{F}(S,\theta) = F\circ \overline{T}(S,\theta) = (S +l(\gamma(T_i(\theta))) \mod 2\pi h,T_i(\theta))$
with probability $p_i(\theta)$ introduced in the previous section.  

\subsection{The Strong Knudsen's Law For the Random Circular Billiards Map}
Recall that Proposition~\ref{prop:F-invariant measure} states that $\overline{F}$ preserves the measure $\lambda \times \mu$, that is, $(\lambda \times \mu)(A\times B) = \sum_{i=1}^4\iint 1_{A\times B}(\overline{F}_i(s,\theta))p_i(\theta)d\lambda (s) d\mu (\theta)$.
That being so, we can define the \textit{transition probability kernel} of the random billiards map $\overline{F}$ as
\begin{equation*}
K((S,\theta), A\times B) := \sum_{i=1}^4 p_i(\theta) 1_{A\times B}(\overline{F}_i(S,\theta)).
\end{equation*}
Once again, this operator describes the evolution of an initial distribution $\nu$ in $(M, \mathcal{B}(M))$ under $\overline{F}$ iteratively as: $\nu^{(0)}:=\nu$ and
\begin{equation*}
\nu^{(n+1)}(A\times B):= \iint K((S,\theta), A\times B) d\nu^{(n)}(S,\theta)
\end{equation*}
for every $A\times B\in\mathcal{B}(M)$ and all $n\ge 1$.

The convergence given in the next proposition is called \textit{Strong Knudsen's Law} for the random circular billiards map (see \cite{lamb} for more details).

\begin{proposition}\label{leiforte}
Let $\frac{\alpha}{\pi}$ be an irrational number, and let $g\colon (0,\pi)\to\Real$ be a continuous function.
If $\nu(S,\theta) := \nu_1(S) \times \nu_2(\theta) = \lambda(S) \times g(\theta)\mu(\theta)$, then
\begin{equation*}
\nu^{(n)}= \nu_1^{(n)}\times\nu_2^{(n)}\longrightarrow (\lambda \times \mu)\text{ setwisely}.
\end{equation*}
\end{proposition}

\begin{proof}
First, observe that $\nu_2:= g\mu$ is absolutely continuous with respect to $\mu$, and so we can make use of Theorem~\ref{prop4.4}.
In addition, we have that
\begin{align*}
\nu^{(1)}(A &\times B) = \iint K((S,\theta), A\times B) d\nu (S,\theta) 
\\
&= \sum_{i=1}^4 \int \int 1_{A \times B}(\overline{F}_i(S,\theta)) p_i(\theta)d(\lambda(S) g(\theta)\mu(\theta))
\\
&=\sum_{i=1}^4 \int p_i(\theta) 1_{B}(T_i(\theta)) \Big(\int 1_{A}(S+l(\gamma(T_i(\theta))) \mod 2\pi h) d\lambda(S)\Big) d(g(\theta)\mu(\theta))
\\
&=\lambda(A) \sum_{i=1}^4 \int 1_B(T_i(\theta))p_i(\theta) d(g(\theta)\mu(\theta)) 
\\
&=\lambda(A)\nu_2^{(1)}(B). 
\end{align*}
We can show by induction that $\nu^{(n)}(A \times B) = \lambda(A)\nu_2^{(n)}(B)$ for all $n\ge 1$.

By the above-mentioned Theorem~\ref{prop4.4}, we already have the setwise convergence $\nu_2^{(n)}\to \mu$ and therefore we conclude that
\begin{equation*}
\nu^{(n)}(A\times B) \longrightarrow \lambda(A)\mu(B)=(\lambda \times \mu)(A \times B)\quad\text{for every }A\times B\in\mathcal{B}(M),
\end{equation*}
as we wanted.
In other words, we just proved the  Strong Knudsen's Law for $\overline{F}$ with respect to a particular family of measures $\nu$.
\end{proof}

The geometric meaning of the above proposition is as follows: given an angle $\alpha$ irrational with $\pi$, the distribution of points and angles after an arbitrarily large number of collisions in the circle is close to the uniform distribution $\lambda \times \mu$.

\subsection{Dense Orbits}
In this section we will show the abundance of dense trajectories of the random circular billiards map $\overline{F}\colon M\to M$ on a surface $\mathbb{M}$ of constant curvature.
We would like to remember that we have fixed $0<\alpha<\frac{\pi}{6}$.

\begin{proposition}\label{densa}
For all $\theta \in (0,\pi)$ such that the arc length $l(\gamma(\theta))$ is an irrational number, there is a dense orbit for the projection of the map $\overline{F}$ in its first coordinate.
\end{proposition}

\begin{proof}
For every $\theta \in (0,\pi-2\alpha)$ notice that $p_1(\theta)p_3(T_1(\theta))>0$ and hence the sequence $\underline{z} = (131313\dots)$ is in $\Sigma_\theta$.
We claim that this sequence generates a dense orbit in the circle.
Indeed, observe first that
\begin{equation*}
\overline{F}_{\underline{z}}^{(2n)}(s,\theta)=(s+2n l(\gamma(\theta+2\alpha)) +2n l(\gamma(\theta)) \mod 2\pi h,\theta),
\end{equation*}
that is, at even times, the random billiards map behaves like a deterministic one of initial angle $2(l(\gamma(\theta + 2\alpha)+ l(\gamma(\theta)))$.
Therefore, if $l(\gamma(\theta))$ is an irrational number, then the orbit is dense in the circle. 
Similarly, if $\theta \in (2\alpha,\pi)$, then we have $p_3(\theta)p_1(T_3(\theta)) >0$.
Thus the sequence $\underline{x}=(313131\dots) \in \Sigma_\theta$ also generates a dense orbit in the circle.
\end{proof}

Let us consider the Markov measure $\nu$ as introduced in Section~3.
Note that we can relate each sequence $\underline{x}\in \cup_{\theta'\in \mathcal{C}(\theta)} \Sigma_{\theta'}$ to a unique sequence $\underline{\theta}\in (\mathcal{C}(\theta))^\mathbb{N}$ and vice versa.
Indeed, given $\theta'\in \mathcal{C}(\theta)$ and
\begin{equation*}
\underline{\theta}=(\theta', \theta'_1, \theta'_2, \cdots) = (\theta', T_{x_1}(\theta'), T_{x_2}\circ T_{x_1}(\theta'), \cdots) \in (\mathcal{C}(\theta))^\mathbb{N},
\end{equation*}
the related sequence in $\cup_{\theta'\in \mathcal{C}(\theta')}\Sigma_{\theta'}$ is $\underline{x}=(x_1, x_2, \cdots)\in \Sigma_{\theta'}$.
On the other hand, given a sequence $\underline{x}=(x_1, x_2, \cdots) \in \Sigma_{\theta'}$,  the related sequence in $(\mathcal{C}(\theta))^\mathbb{N}$ is
\begin{equation*}
\underline{\theta}= (\theta', \theta'_1, \theta'_2, \cdots) = (\theta', T_{x_1}(\theta'), T_{x_2}\circ T_{x_1}(\theta'), \cdots).
\end{equation*}

Thus, for each $\underline{\theta} \in \big(\mathcal{C}(\theta) \big)^\mathbb{N}$, let us consider the $\overline{F}$-trajectory of its related sequence $\underline{x} \in \cup_{\theta' \in \mathcal{C}(\theta)} \Sigma_{\theta'}$, which is denoted by $\overline{F}_{\underline{x}}.$

\begin{theorem}\label{orbitadensa}
For all $(s,\theta) \in M$ and $\nu$-almost every $\underline{\theta}\in\big(\mathcal{C}(\theta)\big)^{\mathbb{N}}$, the trajectory $\overline{F}_{\underline{x}}(s,\theta)$ is dense in the circle $\Gamma$, provided that 
\begin{enumerate}[\qquad 1.]
\item $\alpha$ is rational with $\pi$ and the arc length $l(\gamma(\theta))$ is an irrational number; or
\item $\alpha$ is irrational with $\pi$ and the Markov chain is aperiodic in $\mathcal{C}(\theta)$.
\end{enumerate}
\end{theorem}

\begin{proof}
This will follow the same lines as in Theorem~6.3 of \cite{Vales}.
As one can see in the proof of Proposition~\ref{densa}, it is enough to choose a $\theta'\in \mathcal{C}(\theta)$ such that $p_1(\theta')p_3(T_1(\theta'))>0$.
In this case, the sequence $z=(131313\cdots)\in \Sigma_{\theta'}$ generates a dense trajectory at $\Gamma$.
Therefore, by ergodicity of the Markov shift map (\textit{cf.} Proposition~\ref{markovrandom}), it follows that, given any finite number $N\in\mathbb{N}$, the word `$1313\dots 13$' (with $N$ times the subword `$13$') appears at almost every $\underline{x}\in \Sigma_{\theta'}$.
The same is true for almost every $\underline{x}\in \Sigma_\theta$ and hence the result follows. 
\end{proof}

\subsection{Lyapunov Exponent}

Recall that the (deterministic) circular billiards map in $\mathbb{M}$ given in Equation~\eqref{bilharcircular} is a rotation of $l(\gamma(\theta))$ in the first coordinate, where $l$ represents the arc length between two collisions.
If $\beta(t)$ is a parameterization of the circle, then the arc length between $t_0$ to $t_1$ is
\begin{equation*}
l(\gamma(\theta)) =  \int_{t_0}^{t_1}|\beta'(t)| dt.
\end{equation*}
Thus, the function $l'$ is bounded by $|\beta'|$.

\begin{definition}
Fix some $\underline{x}\in\Sigma_{\theta}$.
The Lyapunov exponent of the random map $\overline{F}$ with respect to $\underline{x}$ at $(s,\theta)$ in the direction $\vec{v}\in\mathbb{M}$ is given by
\begin{equation*}
\lambda_{\underline{x}}(\vec{v}, (s,\theta))=\lim_{n \rightarrow \infty} \frac{1}{n}\log\|D_{(s,\theta)}\overline{F}^{(n)}_{\underline{x}}\cdot\vec{v}\|,
\end{equation*}
when this limit exists and when $D_{(s,\theta)}\overline{F}^{(n)}_{\underline{x}}$ is well defined for all $n\in\mathbb{N}$.
\end{definition}

\begin{proposition}
The Lyapunov exponent of the random circular billiards map in the circle is zero for all $(s,\theta)\in M$, $\underline{x}\in \Sigma_\theta$ and every direction $\vec{v} \in \mathbb{M}$.
\end{proposition}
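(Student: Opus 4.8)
The plan is to exploit the extremely rigid form of the differential of $\overline{F}$: because the circular billiard map merely shears the arc‑length coordinate, every iterated differential $D\overline{F}^{(n)}_{\underline{x}}$ is an upper–triangular $2\times2$ matrix with diagonal entries $\pm1$, so its operator norm can grow at most linearly in $n$, which forces the Lyapunov exponent to vanish. To make this precise I would first write $\overline{F}_i=F\circ\overline{T}_i$ and differentiate by the chain rule; since $DF(s,\theta)=\left[\begin{smallmatrix}1&\gamma'(\theta)h\\0&1\end{smallmatrix}\right]$ and $D\overline{T}_i(s,\theta)=\left[\begin{smallmatrix}1&0\\0&T_i'(\theta)\end{smallmatrix}\right]$ with $T_i'(\theta)=\pm1$ for each $i$ (every $T_j$ is affine of slope $\pm1$), one gets
$$D\overline{F}_i(s,\theta)=DF\big(\overline{T}_i(s,\theta)\big)\,D\overline{T}_i(s,\theta)=\begin{bmatrix}1 & T_i'(\theta)\,h\,\gamma'(T_i\theta)\\[2pt] 0 & T_i'(\theta)\end{bmatrix}.$$
Hence, writing $q_0=(s,\theta)$, $q_j=\overline{F}_{x_j}\circ\cdots\circ\overline{F}_{x_1}(s,\theta)$ and $M_j=D_{q_{j-1}}\overline{F}_{x_j}=\left[\begin{smallmatrix}1&a_j\\0&\varepsilon_j\end{smallmatrix}\right]$ with $\varepsilon_j=\pm1$ and $a_j=\varepsilon_j\,h\,\gamma'(\theta_j)$ ($\theta_j$ the angle coordinate of $q_j$), an easy induction gives $D_{(s,\theta)}\overline{F}^{(n)}_{\underline{x}}=M_nM_{n-1}\cdots M_1=\left[\begin{smallmatrix}1&b_n\\0&\prod_{j\le n}\varepsilon_j\end{smallmatrix}\right]$ with $b_n=\sum_{k=1}^n a_k\prod_{j<k}\varepsilon_j$.

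Next I would invoke the uniform bound discussed at the start of this subsection: the off‑diagonal entry $h\,\gamma'(\theta)=\frac{d}{d\theta}l(\gamma(\theta))=l'(\gamma(\theta))\,\gamma'(\theta)$ is bounded in absolute value by a constant $C$ for all $\theta\in(0,\pi)$ in each of $\mathbb{R}^2,\mathbb{H}^2,\mathbb{S}^2$ (it is controlled by $\sup_t|\beta'(t)|$ together with the boundedness of $\gamma'$). Therefore $|a_k|\le C$ for every $k$, whence $|b_n|\le\sum_{k=1}^n|a_k|\le Cn$. Setting $N_n:=D_{(s,\theta)}\overline{F}^{(n)}_{\underline{x}}$, we have $\det N_n=\pm1$, $\|N_n\|\le 2(1+Cn)$, and $N_n^{-1}=\left[\begin{smallmatrix}1&\mp b_n\\0&\pm1\end{smallmatrix}\right]$ satisfies the same estimate $\|N_n^{-1}\|\le 2(1+Cn)$.

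To conclude, for any unit vector $\vec v$ one has $\|N_n^{-1}\|^{-1}\le\|N_n\vec v\|\le\|N_n\|$ (the lower bound because $\vec v=N_n^{-1}(N_n\vec v)$), so
$$\frac{1}{2(1+Cn)}\ \le\ \|N_n\vec v\|\ \le\ 2(1+Cn).$$
Applying $\tfrac1n\log(\cdot)$ and letting $n\to\infty$, both sides tend to $0$; hence the limit defining $\lambda_{\underline{x}}(\vec v,(s,\theta))$ exists and equals $0$, for every $(s,\theta)\in M$, every $\underline{x}\in\Sigma_\theta$, and every direction $\vec v$. All differentials above are well defined because $\underline{x}\in\Sigma_\theta$ keeps the successive angles $\theta_j$ inside $(0,\pi)$, where $F$ and the $T_i$ are smooth.

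The only genuinely nontrivial point is the uniform bound $C$ on $h\,\gamma'(\theta)$ — i.e. that the central‑angle function $\gamma(\theta)$ has derivative bounded over the full range of angles, in particular that $\gamma'$ does not blow up as a randomly perturbed angle approaches $0$ or $\pi$, in each of the three geometries. In the Euclidean case this is immediate because $\gamma$ is affine in $\theta$; the hyperbolic and spherical cases reduce to an elementary trigonometric computation for the chord of a circle of radius $r_0$, which I would carry out (or simply cite from the references on billiards on surfaces of constant curvature) to supply the constant $C$.
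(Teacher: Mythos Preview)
Your argument is correct and follows essentially the same route as the paper: compute $D_{(s,\theta)}\overline{F}^{(n)}_{\underline{x}}$ as an upper-triangular matrix with diagonal entries $\pm1$ and off-diagonal entry growing at most linearly in $n$, then conclude $\tfrac{1}{n}\log\|N_n\vec v\|\to 0$. Your version is in fact a bit more careful than the paper's---you make the lower bound explicit via $\|N_n^{-1}\|$, and you rightly flag that the uniform bound on $h\,\gamma'(\theta)$ deserves justification in the non-Euclidean geometries, a point the paper handles only informally.
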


\begin{proof}
First, notice that, given any $(s,\theta)\in M$ and $\underline{x}\in\Sigma_{\theta}$, the derivative $D_{(s,\theta)}\overline{F}^{(n)}_{\underline{x}}$ is represented by the matrix
\begin{equation*}
D_{(s,\theta)}\overline{F}^{(n)}_{\underline{x}}=\begin{bmatrix}
1 & A_nh \\
0 & B_n
\end{bmatrix},
\end{equation*}
where
\begin{align*}
A_n&= \sum_{k=1}^n l'\Big(\gamma\big(T_{x_k} \circ T_{x_{k-1}}\circ \dots \circ T_{x_1}(\theta))\big)\Big)T'_{x_k}(T_{x_{k-1}}\circ \dots \circ T_{x_1}(\theta))\dots T'_{x_1}(\theta)
\intertext{and}
B_n&=T_{x_k}'(T_{x_{k-1}}\circ \dots \circ T_{x_1}(\theta))T'_{x_{k-1}}(T_{x_{k-2}}\circ \dots \circ T_{x_1}(\theta))\dots T'_{x_2}(T_{x_1(\theta)})T'_{x_1}(\theta).
\end{align*}

Since the maps $T_{x_i}$ are linear, and as $l'$ is bounded by $|\beta'|$, we conclude that $|A_n|\leq |\beta'|n$.
We also have $B_n \in \{-1,1\}$.
Thus, taking any direction $\vec{v}=(v_1,v_2)\in \mathbb{M}$, we obtain that 
\begin{equation*}
\frac{1}{n}\log \|D_{(s,\theta)} \overline{F}_{\underline{x}}^{(n)} \cdot \vec{v} \| =\frac{1}{n} \log \| (v_1+v_2A_nh, v_2B_n) \|.
\end{equation*}
Therefore, as $A_n$ and $B_n$ are bounded, it follows that
\begin{equation*}
\lambda_{\underline{x}}(\vec{v},(s,\theta))=\lim_{n\to \infty} \frac{1}{n} \log \|D_{(s,\theta)}\overline{F}_{\underline{x}}^{(n)} \cdot \vec{v}\| = 0.
\end{equation*} 
\end{proof}

\section{Mixing and Pseudo integrability}

\subsection{Mixing}

We begin this subsection constructing a skew-product related to the random map $\overline{F}$ that will be useful in the definition of mixing.
In fact, we need the following elements extracted from \cite{Quas} and \cite{lamb}: considering $X=[0,\pi]\times [0,1]$ we can define the sets $J_i=\{(\theta,x) \in X\colon \sum_{k<i}p_k(\theta) < x < \sum_{k\leq i} p_k(\theta)\}$, for  $i=1,2,3,4$.
Note that $\{J_1,J_2,J_3,J_4\}$ is a partition of $X$ (see Figure~\ref{decomposicaoJ} below).
\begin{figure}[h]
    \centering
    \includegraphics[scale=0.5]{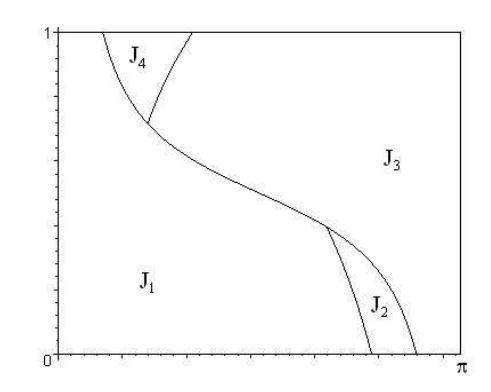}
    \caption{A partition of $X$ composed by the sets $J_i$.}
    \label{decomposicaoJ}
\end{figure}
Besides, for each $i=1,2,3,4$, let $\varphi_i\colon X\to[0,1]$ be given by $\varphi_i(\theta,x) = \frac{1}{p_i(\theta)}\big(x-\sum_{k=1}^{i-1} p_k(\theta)\big)$.
Observe that each map $\varphi_i$ is well defined.
Indeed, if $(\theta,x) \in J_i$, then $p_i(\theta) > 0$ necessarily.

With these we define the skew-product:
\begin{equation*}
\begin{matrix}
S\colon & [0,1]\times[0,L]\times[0,\pi] & \longrightarrow & [0,1]\times[0,L]\times[0,\pi]\\
& (x,s,\theta) & \longmapsto & (\varphi_i(\theta,x),s+2T_i(\theta)\mod{2\pi h},T_i(\theta))
\end{matrix}
\end{equation*}
if $(\theta,x)\in J_i$, for $i=1,2,3,4$.
This skew-product has an invariant measure given by $\eta\times\lambda\times\mu$, where $\eta$ and $\lambda$ are the normalized Lebesgue measures on the interval $[0,1]$ and $[0,L]$, respectively.
Recall that $\mu$ is as defined in Proposition~\ref{prop:T-invariance}, that is, such that $d\mu=\frac{1}{2}\sin(\theta)d\theta$.

Following \cite{Quas} and \cite{morita1988deterministic}, we can define: 
\begin{definition}
The random circular billiards map $\overline{F}$ is \textit{mixing} (or \textit{strongly mixing}) if the skew-product $S$ is mixing (or strongly mixing).
\end{definition}

We would like to remark that this definition can be extended to all random billiards map.
For more details, see \cite{Quas}.

In the deterministic case, the billiards map on a surface of constant curvature cannot be ergodic due to its integrability.
However, we will show that, in the random case, the map is mixing whenever the parameter $\alpha$ is irrational with $\pi$.
This is one of the features that strongly differs the deterministic case from the random one.

On the other hand, when the parameter $\alpha$ is rational with $\pi$, we are able to find invariant regions whose measure is different from 0 and 1, so it cannot be ergodic.
In order to see this we can use, for example, \cite[Proposition~3.10]{Vales}.

\begin{theorem}[Main Theorem]
Considering a surface of constant curvature, the random circular billiard map is mixing if and only if $\frac{\alpha}{\pi}$ is an irrational number.
\end{theorem}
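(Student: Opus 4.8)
The plan is to argue directly from the definition: it suffices to prove that the skew product $S$ on $[0,1]\times[0,L]\times[0,\pi]$, preserving $\eta\times\lambda\times\mu$, is mixing precisely when $\tfrac{\alpha}{\pi}$ is irrational. Write $S_{0}$ for the base endomorphism obtained by dropping the $s$-coordinate, $S_{0}(x,\theta)=(\varphi_{k}(x,\theta),T_{k}(\theta))$ on $J_{k}$; this is exactly the deterministic representation of the Feres random map, so $\eta\times\mu$ is $S_{0}$-invariant. Along a base orbit $(\theta_{j})_{j\ge 0}$ the position evolves by $s_{n}=s_{0}+D_{n}\bmod 2\pi h$, where $D_{n}=\sum_{j=1}^{n}2T_{k_{j-1}}(\theta_{j-1})=2\sum_{j=1}^{n}\theta_{j}$, so $S$ is the extension of $S_{0}$ by the cocycle $D_{1}$ with values in the circle $\mathbb{R}/2\pi h\mathbb{Z}$. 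The first step is Fourier analysis in $s$: testing mixing on functions $e^{ims/h}F(x,\theta)$, $m\in\mathbb{Z}$, $F\in L^{2}(\eta\times\mu)$, and using $\int_{0}^{2\pi h}e^{i(m-m')s/h}\,ds=0$, one checks that all cross terms with $m\ne m'$ vanish identically on both sides, so mixing of $S$ reduces to the conjunction of: (a) mixing of $S_{0}$ with respect to $\eta\times\mu$ (the mode $m=0$); and (b) for every $m\ne 0$ and all $F,G\in L^{2}(\eta\times\mu)$, the twisted correlation $\big\langle e^{imD_{n}/h}\,(F\circ S_{0}^{\,n}),\,G\big\rangle_{\eta\times\mu}\to 0$ (the modes $m\ne 0$, whose limit value is forced to be $0$).

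Now assume $\tfrac{\alpha}{\pi}$ is irrational. For (a): Theorem \ref{prop4.4} says every $\nu\ll\mu$ satisfies $\nu^{(n)}\to\mu$ under the Feres random map, and by the deterministic-representation theory of \cite{morita1988deterministic} and \cite{Quas} this convergence of the transfer operator is equivalent to exactness — in particular mixing — of $S_{0}$. For (b): since $\tfrac{\alpha}{\pi}$ is irrational the reachable-angle set $\mathcal{C}(\theta_{0})$ is countably infinite and contains $(\theta_{0}+2\alpha\mathbb{Z})\cap(0,\pi)$, so the subgroup of $\mathbb{R}/2\pi h\mathbb{Z}$ generated by the differences of the values of $D_{1}$ is dense; thus $D_{1}$ is aperiodic, i.e. not cohomologous to a cocycle taking values in a proper closed (hence finite) subgroup. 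Combining aperiodicity of the cocycle with exactness of the base yields (b): using the loss of memory from exactness together with the ergodicity of the associated Markov shift (Proposition \ref{markovrandom}) one couples two independent copies of the base orbit so that after a long random time their angle coordinates are synchronised up to the branch choices, while the difference of the two accumulated displacements performs a random walk on $\mathbb{R}/2\pi h\mathbb{Z}$ whose law equidistributes; this equidistribution of the relative displacement is precisely the vanishing in (b). Hence $S$, and therefore $\overline{F}$, is mixing.

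For the converse, assume $\tfrac{\alpha}{\pi}$ is rational. Then $\mathcal{C}(\theta)$ is finite for every $\theta$, and by Remark \ref{obs1} the sets $\mathcal{C}(\theta)$ form a Borel partition of $(0,\pi)$ into finite invariant pieces. Choosing, as in Proposition 3.10 of \cite{Vales}, a Borel union $\Theta_{0}$ of such pieces with $0<\mu(\Theta_{0})<1$, the set $[0,1]\times[0,L]\times\Theta_{0}$ is $S$-invariant with $\eta\times\lambda\times\mu$-measure strictly between $0$ and $1$, so $S$ is not even ergodic, hence not mixing, hence $\overline{F}$ is not mixing. This closes the equivalence.

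The step I expect to be the main obstacle is (b) in the irrational case — upgrading ``the displacement cocycle is aperiodic'' to the quantitative decay of the twisted correlations. Ergodicity and weak mixing of a circle extension only require the absence of measurable solutions of a twisted eigenvalue equation $e^{imD_{1}/h}\,g\circ S_{0}=e^{ic}g$, whereas strong mixing demands that the twisted Koopman operator $f\mapsto e^{imD_{1}/h}(f\circ S_{0})$ tend to $0$ in the weak operator topology on $L^{2}(\eta\times\mu)$, a genuinely spectral (Rajchman-type) property. Making the coupling and equidistribution argument rigorous — in particular controlling the law of the relative accumulated displacement as $n\to\infty$ — is where the real work lies; what makes it tractable here is that $S_{0}$ is exact with an explicit countable-state Markov structure and that $D_{1}$ depends on the current state only through the single new angle.
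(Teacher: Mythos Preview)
Your approach differs substantially from the paper's, and the gap you identify in step~(b) is real.

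You decompose $L^{2}$ of the skew product by Fourier modes in $s$ and reduce mixing of $S$ to (a) mixing of the two-dimensional base $S_{0}$ on $[0,1]\times[0,\pi]$ and (b) decay of the twisted correlations $\langle e^{imD_{n}/h}(F\circ S_{0}^{\,n}),G\rangle\to 0$ for each nonzero mode $m$. The converse direction (rational $\tfrac{\alpha}{\pi}$) is handled the same way in both arguments.

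The paper never performs a Fourier decomposition and never isolates a cocycle. Instead it keeps the $s$-coordinate inside the state space and works with the random map on the full $X=[0,L]\times[0,\pi]$: Proposition~6.1 of \cite{Vales} (the two-dimensional analogue of Proposition~\ref{leiforte} here) gives $\nu^{(n)}(E)\to(\lambda\times\mu)(E)$ for every $E\in\mathcal{B}(X)$ whenever $\nu=\lambda\times\nu_{2}\ll\lambda\times\mu$, and by Theorem~\ref{prop4.4} this holds iff $\tfrac{\alpha}{\pi}$ is irrational. Theorem~4 and Corollary~5 of \cite{feres} then identify $\nu^{(n)}(E)$ with $\int\pi_{2}^{*}(f)\,U_{S}^{n}(1_{\pi_{2}^{-1}(E)})\,d(\eta\times\lambda\times\mu)$, where $f=\tfrac{d\nu}{d(\lambda\times\mu)}$, and Proposition~4.4.1(b) of \cite{lasota} converts this convergence of correlations into mixing of $S$. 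In short, the paper lifts the \emph{already-established} two-dimensional strong Knudsen law to the skew product via black-box results, bypassing your step~(b) entirely.

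That is precisely where your proposal is incomplete. Aperiodicity of the displacement cocycle together with exactness of $S_{0}$ yields ergodicity (even weak mixing) of the circle extension by standard Liv\v{s}ic/eigenfunction arguments, but \emph{strong} mixing requires the twisted transfer operators $f\mapsto e^{imD_{1}/h}(f\circ S_{0})$ to tend to zero in the weak operator topology, a Rajchman-type spectral statement that your coupling sketch does not establish. The paper's route shows this extra work is unnecessary: because the $s$-variable was never peeled off as a group extension, the decay you seek is already contained in the convergence $\nu^{(n)}\to\lambda\times\mu$ on $X$. What each approach buys: yours would, if completed, give a self-contained spectral picture of the extension and would generalise to other fibre cocycles; the paper's is shorter and leverages existing machinery, at the cost of depending on the cited results in \cite{feres} and \cite{lasota}.
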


\begin{proof}
Denote the random circular billiards map by $\overline{F}$ and its associated skew-product by $S(x,\tilde{\theta}):= S(x,s,\theta)$ as in the beginning of this subsection.

First, let us assume that $\alpha$ is irrational with $\pi$.
Using Proposition~\ref{leiforte}, if $\nu:= \lambda \times \nu_2 \ll \lambda \times \mu$, then $\nu^{(n)}(E)\to (\lambda \times \mu)(E)$ for every measurable set $E\in \mathcal{B}([0,L]\times [0,\pi])$.
In addition, \cite[Theorem~4 and Corollary~5]{feres} (with $X= [0,L] \times [0,\pi]$ and $\nu = \lambda \times \nu_2$, where $\nu_2 \ll \mu$) state that $\nu^{(n)}(E) = \int_{[0,1] \times X} U_S^n (1_{\pi_2^{-1}(E)})d(\eta \times \nu)$.

Since $\nu=\lambda \times \nu_2 \ll \lambda \times \mu$, let $f \in L^1([0,L]\times [0,\pi],\lambda \times \mu)$ be the Radon-Nikodym derivative.
Thus, $\pi^*_2(f) = \frac{d(\eta \times \lambda \times \nu_2)}{d(\eta \times \lambda \times \mu)}$, where $\pi^*_2(f)(x,\tilde{\theta}) = f(\pi_2(x,\tilde{\theta}))$ for every $\tilde{\theta}=(s,\theta)\in [0,L]\times[0,\pi]$.
It then follows that:
\begin{align*}
\int \pi^*_2(f)&U_S^n(1_{\pi_2^{-1}(E)})d(\eta \times \lambda \times \mu) = \int_{[0,1] \times X} U_S^n(1_{\pi_2^{-1}(E)})d(\eta \times \lambda \times \nu_2 ) \\ & = \nu^{(n)}(E)\to(\lambda \times \mu) (E) = \int \pi^*_2(f) d(\eta \times \lambda \times \mu) \int 1_{\pi_2^{-1}(E)} d(\eta \times \lambda \times \mu)
\end{align*}
for every measurable set $E\in\mathcal{B}([0,L]\times[0,\pi])$.

Therefore, \cite[Proposition~4.4.1(b)]{lasota} guarantees that $S$ is mixing.
This means by definition that $\overline{F}$ is mixing.
To conclude the reverse statement just notice that the results mentioned in this proof hold only if $\frac{\alpha}{\pi}$ is an irrational number. 
\end{proof}

\subsection{Pseudo Integrability}
We start by remembering that a deterministic dynamical system $F\colon M\to M$ in a $2$-dimensional manifold is said to be \textit{integrable} if it admits a non-constant smooth and invariant function $G\colon M\to\Real$, called the motion constant.
Symbolically, this means that $G(F(x, y)) = G(x, y)$ for every $(x,y)\in M$.
Furthermore, if $M$ is foliated by $1$-dimensional submanifolds (\textit{i.e.}, curves), then the system is called \textit{fully integrable}.
It is well-known that the (deterministic) circular billiards map is fully integrable and its phase space is covered by invariant curves.

On the other hand, we say (informally) that a random billiards map is \textit{pseudo integrable} if its phase space is foliated by curves, which are invariant as a set.
The precise meaning is given in Definition~\ref{def:pseudo integravel}.
As an example, for each chosen sequence in the circular case, we would have a set of curves (segments of straight lines) in the phase space that are invariant.

\begin{definition}
A sequence $\underline{x} \in \Sigma$ is \textit{almost admissible} for $\theta \in [0,\pi]$ if $ T_{x_k}\circ \dots\circ T_{x_2}\circ T_{x_1}(\theta)$ occurs with probability $p_{x_k}(T_{x_{k-1}}\circ \dots \circ T_{x_1}(\theta) )\cdots p_{x_2}(T_{x_1}(\theta))p_{x_1}(\theta) \geq 0$ and $T_{x_1}(\theta), T_{x_2}\circ T_{x_1}(\theta), \dots, T_{x_k}\circ \dots \circ T_{x_1}(\theta) \in [0,\pi]$ for all $k\in \mathbb{N}.$
\end{definition}

The above definition accepts all zero-probability compositions whose images belong to the range $[0,\pi]$ (compare with Definition~\ref{admissivel}).

\begin{lemma}\label{lema}
Fix some $\alpha<\frac{\pi}{6}$.
Given $\theta \in [0,\pi]$, there is an almost admissible sequence $\underline{x}$ such that $T_{x_k}\circ \dots\circ T_{x_2}\circ T_{x_1}(\theta) \in [0,\alpha]$ for some $k \in \mathbb{N}$.
\end{lemma}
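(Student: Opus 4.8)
The plan is to track the orbit of $\theta$ under the four maps $T_1,\dots,T_4$ with $\alpha=\frac{\pi}{6}$ and exhibit an explicit almost admissible sequence driving $\theta$ into $[0,\alpha]=[0,\frac{\pi}{6}]$. With $\alpha=\frac{\pi}{6}$ the maps specialize to $T_1(\theta)=\theta+\frac{\pi}{3}$, $T_3(\theta)=\theta-\frac{\pi}{3}$, $T_2(\theta)=-\theta+\frac{4\pi}{3}$, $T_4(\theta)=-\theta+\frac{2\pi}{3}$, and the relevant subintervals of $[0,\pi]$ partition into blocks of length $\frac{\pi}{6}$; note $\pi-3\alpha=\frac{\pi}{2}$, $\pi-2\alpha=\frac{2\pi}{3}$, $\pi-\alpha=\frac{5\pi}{6}$. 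First I would reduce to a case analysis on which of the six intervals $[0,\frac{\pi}{6}],[\frac{\pi}{6},\frac{\pi}{3}],\dots,[\frac{5\pi}{6},\pi]$ contains $\theta$; the interval $[0,\frac{\pi}{6}]$ is the target and requires nothing.

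The key observation is that $T_3$ subtracts $\frac{\pi}{3}$ (two interval-widths) and its successor-interval $[3\alpha,\pi-\alpha]=[\frac{\pi}{2},\frac{5\pi}{6}]$ is exactly where $p_3>0$, so as long as $\theta$ is large enough we can keep applying $T_3$ with positive probability and strictly decrease $\theta$ by $\frac{\pi}{3}$ each step; this handles $\theta$ in roughly the upper half. For $\theta$ near the middle or slightly above $\frac{\pi}{6}$, the reflections $T_4$ and $T_2$ come in: $T_4(\theta)=-\theta+\frac{2\pi}{3}$ maps $[\frac{\pi}{3},\frac{\pi}{2}]$ into $[\frac{\pi}{6},\frac{\pi}{3}]$ and then $T_3$ (or the almost-admissible extension allowing zero probability) is not needed — actually $T_4$ on $[\frac{\pi}{6},\frac{\pi}{3}]$ lands in $[\frac{\pi}{3},\frac{\pi}{2}]$, so one composes $T_4$ then something decreasing. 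The cleanest route is: (i) show from any $\theta\in[\frac{\pi}{2},\pi]$ iterating $T_3$ eventually reaches $[0,\frac{\pi}{2})$ with positive probability, using that $p_3=1$ on $[\frac{5\pi}{6},\pi]$, $p_3=u_\alpha(-\theta)>0$ on $[\frac{\pi}{2},\frac{5\pi}{6})$; (ii) show from $\theta\in[\frac{\pi}{3},\frac{\pi}{2})$ one application of $T_4$ (with $p_4=u_\alpha(-\theta)-2\cos(2\alpha)u_{2\alpha}(-\theta)\geq 0$, which is where "almost admissible" is genuinely used since this may vanish) lands in $[\frac{\pi}{6},\frac{\pi}{3})\subset[\frac{\pi}{6},\frac{\pi}{2})$; (iii) from $\theta\in[\frac{\pi}{6},\frac{\pi}{3})$ apply $T_1$ to reach $[\frac{\pi}{2},\frac{2\pi}{3})$ and return to step (i), or better, observe $T_4$ again or a short explicit word closes the loop into $[0,\frac{\pi}{6}]$. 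I would organize this as a finite directed graph on the six intervals with edges labeled by admissible (or almost-admissible) maps, and check by inspection that the node $[0,\frac{\pi}{6}]$ is reachable from every node.

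The main obstacle I anticipate is the boundary bookkeeping at $\alpha=\frac{\pi}{6}$: this is exactly the critical value excluded from the earlier propositions (which require $\alpha<\frac{\pi}{6}$), several of the probability formulas collapse or have intervals degenerate, and the functions $u_\alpha(\theta)=1+\frac{\tan\alpha}{\tan\theta}$ change sign at $\theta=\frac{\pi}{2}$, so one must be careful that the claimed "positive probability" steps really are nonnegative and that the orbit never gets stuck oscillating between two intervals without ever entering $[0,\frac{\pi}{6}]$. Concretely, the risk is a periodic cycle in the interval-graph avoiding the target; ruling this out is the crux, and it is handled by noting that $T_3$ (when available) strictly decreases $\theta$ while $T_1$ strictly increases it, and interleaving a reflection $T_4$ after a bounded number of $T_1$'s breaks any would-be cycle. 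I would also double-check that allowing zero-probability compositions (the whole point of the "almost admissible" notion) is only needed on the reflection steps $T_2,T_4$ at the interval endpoints, and that the increasing/decreasing translation steps $T_1,T_3$ always carry strictly positive probability on the relevant intervals, so the constructed sequence is "almost" admissible in the precise sense of the definition. Finally I would record the explicit bound on $k$ (at most a small constant, independent of $\theta$) that falls out of the finite graph, though the statement only asks for existence.
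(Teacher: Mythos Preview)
Your overall strategy (iterate $T_3$ to push $\theta$ downward, then handle the residue) is the same as the paper's, but your argument has a genuine gap at step~(iii), and the organization is more elaborate than necessary.

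Concretely, both of your explicit proposals for escaping $[\tfrac{\pi}{6},\tfrac{\pi}{3})=[\alpha,2\alpha)$ cycle. Applying $T_1$ and ``returning to step~(i)'' just undoes itself, since step~(i) applies $T_3$ and $T_3\circ T_1=\mathrm{Id}$; and applying $T_4$ sends $[\tfrac{\pi}{6},\tfrac{\pi}{3})$ to $(\tfrac{\pi}{3},\tfrac{\pi}{2}]$, whereupon your step~(ii) applies $T_4$ again and returns you to the start (recall $T_4\circ T_4=\mathrm{Id}$). Your fallback claim that ``interleaving a reflection $T_4$ after a bounded number of $T_1$'s breaks any would-be cycle'' is not an argument: you never actually exhibit the word, and the two concrete words you do propose are precisely the identity on the problematic interval.

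The paper's proof is much shorter and avoids this difficulty entirely. It does not split $[0,\pi]$ into six pieces or pass through an intermediate interval $[\tfrac{\pi}{3},\tfrac{\pi}{2})$. Instead it simply applies $T_3$ (which subtracts $2\alpha$) the maximal number $k$ of times for which the iterate remains in $[0,\pi]$; necessarily $T_3^{k}(\theta)\in[0,2\alpha)$. If $T_3^{k}(\theta)\in[0,\alpha]$ we are done. Otherwise $T_3^{k}(\theta)\in[\alpha,2\alpha]$, and then the two-letter word $T_3\circ T_4$ finishes the job: $T_4$ maps $[\alpha,2\alpha]$ onto $[2\alpha,3\alpha]$, and $T_3$ maps that onto $[0,\alpha]$. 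All intermediate values lie in $[0,\pi]$ and all the relevant probabilities are $\ge 0$, so the sequence is almost admissible. Thus your step~(ii) is superfluous, and the delicate worries you raise about sign changes of $u_\alpha$ near $\theta=\tfrac{\pi}{2}$ and the criticality of $\alpha=\tfrac{\pi}{6}$ never actually enter: the ``almost admissible'' notion only asks that the orbit stay in $[0,\pi]$ and that the transition probabilities be nonnegative, both of which are immediate here.
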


\begin{proof}
First, suppose that $\theta \in [\alpha, 2\alpha]$.
This implies that $T_4(\theta)\in [2\alpha, 3\alpha]$ and hence $T_3\circ T_4(\theta)\in [0, \alpha]$.
In this case just take any almost admissible sequence $\underline{x}$ with $x_1=4$ and $x_2=3$.

Now suppose that $\theta\in[0,\pi]\setminus[\alpha,2\alpha]$.
Notice that there is a maximum number $k\in\mathbb{N}$ of times that we can apply $T_3$ consecutively.
Thus either $T_3^k(\theta) \in [0,\alpha]$ or $T_3^k(\theta) \in [\alpha, 2\alpha]$.
The former case are done, and the latter brings us to our previous assumption, that is, $T_3 \circ T_4 \circ T_3^k(\theta) \in [0,\alpha]$.
In both cases we can take an almost admissible sequence satisfying the statement.
\end{proof}

In order to define the notion of \textit{pseudo integrability} of random billiards maps, let us establish the following equivalence relation.

\begin{definition}
We say that $\theta,\theta'\in[0,\pi]$ are related (denoted by $\theta \sim \theta'$) if there is an almost admissible sequence $\underline{x}$ such that $\theta'= T_{x_k}\circ \dots \circ T_{x_1}(\theta)$ for some $k \in \mathbb{N}$.
\end{definition}

This leads us to the following immediate corollary of Lemma~\ref{lema}.

\begin{corollary}\label{lemma5.2}
For $\alpha < \frac{\pi}{6},$ we have that $\dfrac{[0, \pi]}{\sim} \equiv [0, \alpha]$.
\end{corollary}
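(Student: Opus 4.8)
The goal is to show that the quotient of $[0,\pi]$ by the equivalence relation $\sim$ (generated by almost admissible sequences of the Feres maps $T_1,T_2,T_3,T_4$) can be identified with the sub-interval $[0,\alpha]$. The plan is to exhibit, for every $\theta\in[0,\pi]$, an almost admissible sequence carrying $\theta$ into $[0,\alpha]$, and then to check that distinct points of $[0,\alpha]$ remain inequivalent, so that $[0,\alpha]$ is a genuine fundamental domain for $\sim$.

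First I would establish surjectivity of the natural map $[0,\alpha]\hookrightarrow [0,\pi]\to [0,\pi]/\!\sim$; equivalently, every $\theta\in[0,\pi]$ is $\sim$-equivalent to some point of $[0,\alpha]$. This is essentially the content of the preceding Lemma but stated there only for $\alpha=\pi/6$; I would redo the reduction for general $\alpha<\pi/6$. The idea is the same: starting from $\theta$, repeatedly apply $T_3(\theta)=\theta-2\alpha$ as long as the result stays in $[0,\pi]$ and is $\geq \alpha$ (each application is almost admissible since all the relevant images stay in $[0,\pi]$, regardless of whether the probability is strictly positive). After finitely many steps one lands in $[0,2\alpha)$; if the value is already in $[0,\alpha)$ we are done, and if it is in $[\alpha,2\alpha)$ we apply $T_4(\theta)=-\theta+4\alpha$ to move into $(2\alpha,3\alpha]$ and then $T_3$ once more to reach $[0,\alpha]$, exactly as in the Lemma. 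One must also handle the symmetric maps $T_2$ near the right endpoint if the descent by $T_3$ alone is obstructed, but since $T_3$ can always be applied whenever $\theta\ge 2\alpha$ (its image $\theta-2\alpha\in[0,\pi]$), the purely $T_3$-then-cleanup argument suffices; I would just spell out the finitely many residual subintervals of $[0,2\alpha)$.

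Second, I would prove injectivity: if $\theta,\theta'\in[0,\alpha]$ and $\theta\sim\theta'$ then $\theta=\theta'$. The natural tool is an invariant of the equivalence relation. Each generator is $\theta\mapsto \pm\theta + c$ with $c$ an integer multiple of $2\alpha$; hence the whole group generated by $T_1,\dots,T_4$ acts by maps of the form $\theta\mapsto \pm\theta + 2k\alpha$, $k\in\Z$. Consequently the quantity $\theta \bmod 2\alpha$, further folded by $\theta\mapsto \min(\theta\bmod 2\alpha,\, 2\alpha-(\theta\bmod 2\alpha))$ — i.e. the distance from $\theta$ to the lattice $2\alpha\Z$ — is invariant under $\sim$. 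This folded invariant takes each value in $[0,\alpha]$ exactly once as $\theta$ ranges over $[0,\alpha]$, so two distinct points of $[0,\alpha]$ cannot be equivalent. (Here it is important that the relation is generated by \emph{almost} admissible sequences, so we may freely use any composition of the $T_i$ and their inverses; the identities $T_1T_3=T_3T_1=\mathrm{id}$, $T_2^2=T_4^2=\mathrm{id}$ from Remark~\ref{obs1} confirm the group is generated by these involutions and translations.) Combining surjectivity and injectivity gives the claimed bijection $[0,\pi]/\!\sim\;\equiv\;[0,\alpha]$.

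The main obstacle I anticipate is bookkeeping at the boundary values and in the definition of "almost admissible": one must be careful that every intermediate image genuinely lies in $[0,\pi]$ (not just that the final one does), and that the descent algorithm terminates for every starting subinterval — in particular for $\theta$ near $\pi$ where $T_3$ must be iterated many times. A secondary subtlety is making the folded invariant rigorous: I must check it is preserved by each of $T_1,T_2,T_3,T_4$ individually (a one-line computation for each, using that $4\alpha$ and $2\pi-4\alpha$ are $\equiv 0$ or relate correctly modulo $2\alpha$ — note $2\pi-4\alpha \equiv -4\alpha \equiv 0 \pmod{2\alpha}$ only if $2\alpha \mid 2\pi$, which fails in general, so the correct invariant is $\theta \bmod 2\alpha$ valued in $\Real/2\alpha\Z$ together with the reflection, and one checks $T_2,T_4$ act as reflections there). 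I would verify each generator's action on $\Real/2\alpha\Z$ explicitly and confirm that the reflection-folding absorbs the sign changes, which closes the argument.
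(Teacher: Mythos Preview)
The paper offers no separate proof of this Corollary; it is stated as an immediate consequence of the preceding Lemma, and that Lemma only establishes \emph{surjectivity}: every $\theta\in[0,\pi]$ is carried into $[0,\alpha]$ by some almost admissible word. Your surjectivity argument reproduces exactly this, correctly extending the $\alpha=\pi/6$ case to all $\alpha<\pi/6$.

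Your injectivity argument, however, has a real gap, and you already put your finger on it without resolving it. The quantity $d(\theta,2\alpha\Z)$ is preserved by $T_1,T_3,T_4$ (their constants $ \pm 2\alpha,\,4\alpha$ lie in $2\alpha\Z$), but \emph{not} by $T_2$: since $T_2(\theta)=-\theta+2\pi-4\alpha$ one gets
\[
d\bigl(T_2(\theta),2\alpha\Z\bigr)=d\bigl(\theta,\,2\pi+2\alpha\Z\bigr),
\]
and $2\pi+2\alpha\Z=2\alpha\Z$ only when $\pi/\alpha\in\Z$. Your proposed fix---pass to $\Real/2\alpha\Z$ and fold by $\theta\mapsto-\theta$---does not help: on $\Real/2\alpha\Z$ the map $T_2$ becomes $\theta\mapsto-\theta+2\pi$, a reflection about the point $\pi\bmod 2\alpha$, \emph{not} about $0$, so it does not descend to the identity after folding. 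Worse, $T_2$ \emph{is} available in almost admissible words: it sends $[\pi-4\alpha,\pi]$ into itself, and that interval is reachable from any $\theta_0\in[0,\alpha)$ by iterating $T_1$. Composing $T_2$ with the other generators produces translations in $\langle 2\alpha,2\pi\rangle$, which is dense in $\Real$ when $\alpha/\pi$ is irrational. So for irrational $\alpha/\pi$ your invariant cannot separate points of $[0,\alpha]$, and in fact distinct points of $[0,\alpha]$ can be $\sim$-equivalent; the identification $[0,\pi]/\!\sim\;\equiv[0,\alpha]$ should be read only as ``every class has a representative in $[0,\alpha]$'', which is all the paper actually uses downstream for pseudo integrability.
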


We can now define pseudo integrability.
Consider from now on the rectangle $M=[0,L)\times[0,\alpha]$ for a fixed $\alpha<\frac{\pi}{6}$.

\begin{definition}\label{def:pseudo integravel}
A random billiards map $\overline{F}\colon M\to M$ is called \textit{pseudo integrable} if there is a motion constant in the quotient phase space, that is, if one can take a function $G\colon M\to\Real$ such that $G\circ\overline{F} \sim G$. Furthermore, if $M$ is foliated by invariant curves, then the random map is called \textit{fully pseudo integrable}.
\end{definition}

\begin{proposition}
The random circular billiards map $\overline{F}\colon M\to M$, as defined in \ref{def:bilhar aleatorio}, is fully pseudo integrable. 
\end{proposition}

\begin{proof}
Fix any $\theta\in[0,\alpha]$.
By Remark~\ref{obs1}, the set $\mathcal{C}(\theta)$ is invariant by the Feres map $T$.
Let $G(s,\theta) = \theta$ and take an almost admissible sequence $\underline{x}$ for $\theta$.
Then, $G(\overline{F}_{\underline{x}}(s, \theta))= T_i(\theta)$ with probability $p_i(\theta)$.
Finally, note that $\theta \sim T_i(\theta)$, that is, $G(\overline{F}_{\underline{x}}(s, \theta)) \sim \theta = G(s,\theta)$. See Figure \ref{pseudointegrabilidade}.
\end{proof}

\begin{figure}[ht]
    \centering
    \includegraphics[scale=0.055]{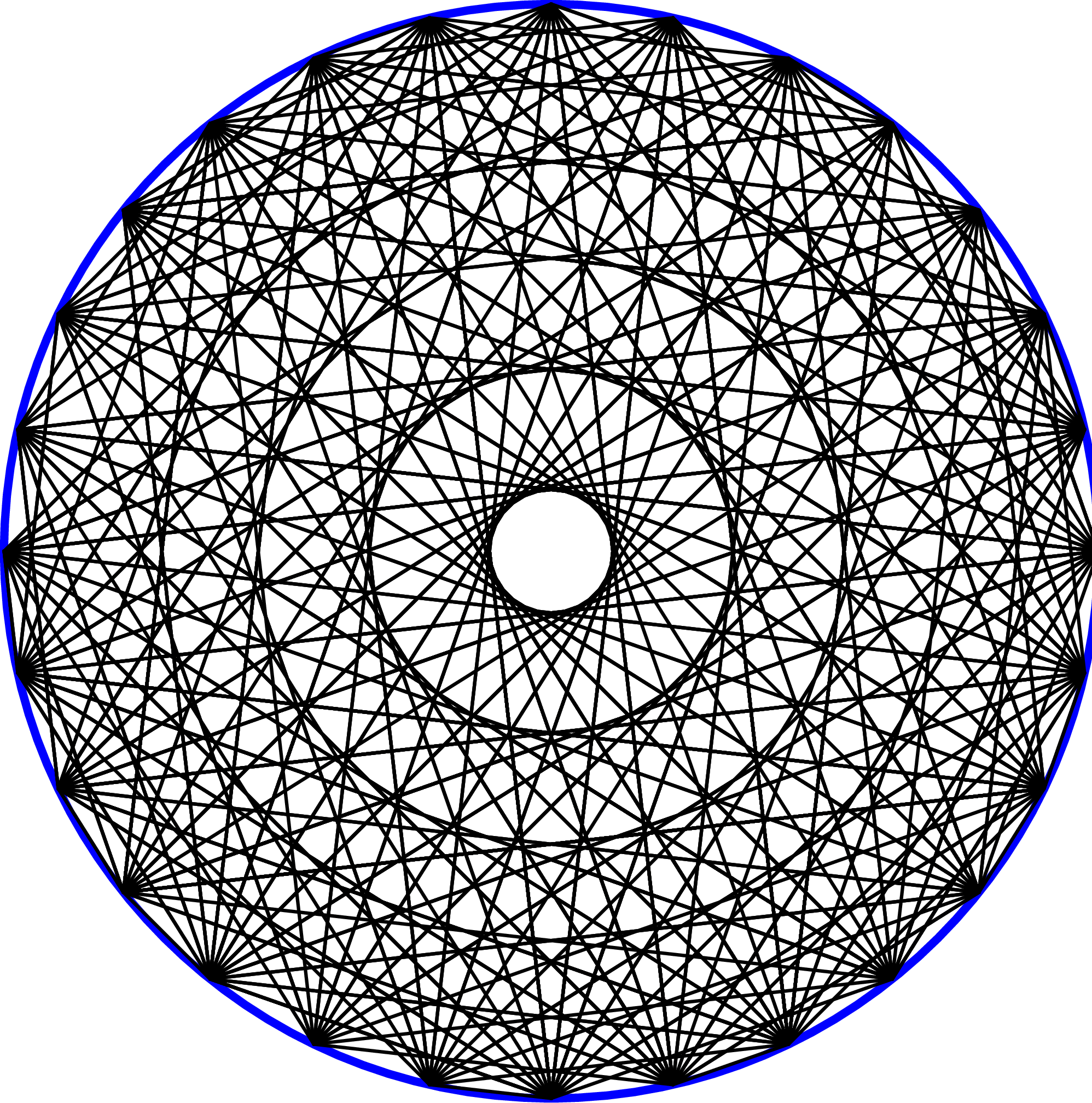}
    \includegraphics[scale=0.1]{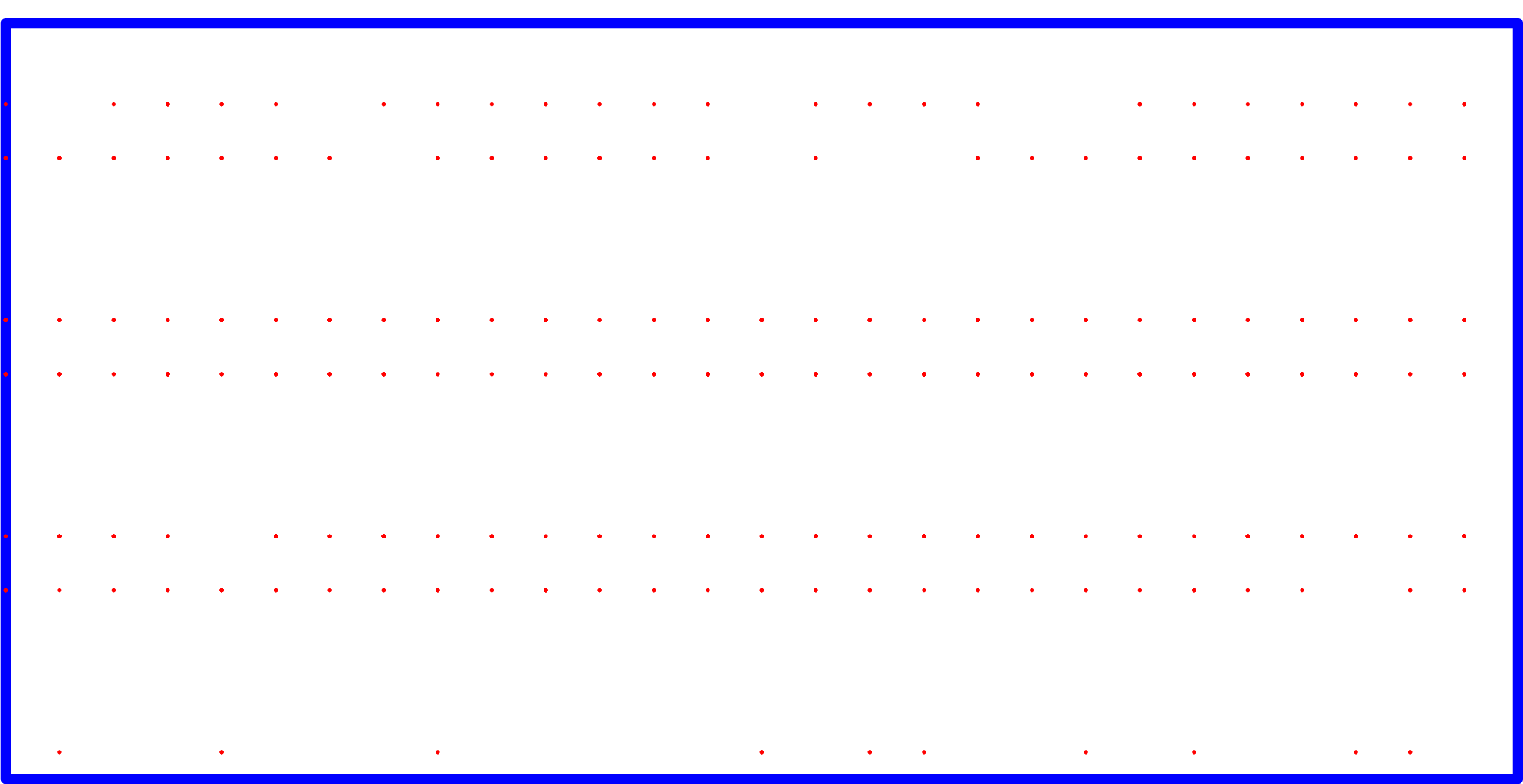}
    \caption{Numerical evidence for pseudo integrability.}
    \label{pseudointegrabilidade}
\end{figure}

%\begin{proposition}
%The random billiard map in the circle $\overline{F}: [0,2\pi)\times [0,\alpha]\to [0,2\pi)\times [0,\alpha]$ is not ergodic.
%\end{proposition}
%\begin{proof}
%Let $\tilde{G}:[0,1] \times [0,2\pi) \times [0,\alpha]\to [0,1] \times [0,2\pi) \times [0,\alpha] $ such that $\tilde{G}(x,s,\theta)=\theta,$ then $\tilde{G}(S(x,s,\theta)) \sim \tilde{G}(x,s,\theta).$ That is, an invariant function that is not constant.
%\end{proof}

\begin{remark}
Notice that, in general curves, the angles of reflection depend on the position, which does not happen in the circular case.
Therefore, even in the quotient space with respect to $\mathcal{C}(\theta)$, is not straightforward to check for pseudo integrability.
\end{remark}

\section{Acknowledgment}
The author would like to thank the referees for all corrections and suggestions.
The author also thanks Cláudia P. Ferreira - IFMG, Luís F. S. Amaral - UFMG, and Rafael C. Pereira - UFMG for their contributions and discussions. 

\bibliographystyle{siam}
\bibliography{bib}

\begin{thebibliography}{10}

\bibitem{Quas}
{\sc W.~Bahsoun, C.~Bose, and A.~Quas}, {\em Deterministic representation for
  position dependent random maps}, Discrete and Continuous Dynamical Systems,
  22 (2008), pp.~529--540.

\bibitem{markarian}
{\sc N.~Chernov and R.~Markarian}, {\em Chaotic billiards}, no.~127, American
  Mathematical Soc., 2006.

\bibitem{lamb}
{\sc K.~Dingle, J.~S.~W. Lamb, and J.-A. L{\'{a}}zaro-Cam{\'{\i}}}, {\em
  Knudsen's law and random billiards in irrational triangles}, Nonlinearity, 26
  (2012), pp.~369--388.

\bibitem{luciano}
{\sc L.~C. dos Santos and S.~Pinto-de Carvalho}, {\em Periodic orbits of oval
  billiards on surfaces of constant curvature}, Dynamical Systems, 32 (2017),
  pp.~283--294.

\bibitem{feres}
{\sc R.~Feres}, {\em Random walks derived from billiards}, Dynamics, ergodic
  theory, and geometry, 54 (2007), pp.~179--222.

\bibitem{gutkin1999hyperbolic}
{\sc B.~Gutkin, U.~Smilansky, and E.~Gutkin}, {\em Hyperbolic billiards on
  surfaces of constant curvature}, Communications in mathematical physics, 208
  (1999), pp.~65--90.

\bibitem{lasota}
{\sc A.~Lasota and M.~C. Mackey}, {\em Probabilistic properties of
  deterministic systems}, Cambridge university press, 2008.

\bibitem{morita1988deterministic}
{\sc T.~Morita}, {\em Deterministic version lemmas in ergodic theory of random
  dynamical systems}, Hiroshima mathematical journal, 18 (1988), pp.~15--29.

\bibitem{stillwell2012geometry}
{\sc J.~Stillwell}, {\em Geometry of surfaces}, Springer Science \& Business
  Media, 2012.

\bibitem{Vales}
{\sc T.~Vales and S.~P. de~Carvalho}, {\em A random billiard map in the
  circle}, Dynamical Systems, 37 (2) (2022), pp.~333--353.

\end{thebibliography}
\end{document}